\newtheorem{theorem}{Theorem}[section]
\newtheorem{proposition}[theorem]{Proposition}
\newtheorem{remark}[theorem]{Remark}
\newtheorem{corollary}[theorem]{Corollary}
\def\eps{\varepsilon}
 \newcommand{\ue}{u^{\eps}} 
 \newcommand{\ha}{h_{a,c}}
\newcommand{\tr}{\operatorname{\text{tr}}}
 \newcommand{\Z}{{\mathbb Z}}
   \newcommand{\Q}{{\mathbb Q}}
  \newcommand{\N}{{\mathbb N}}
\newcommand{\R}{{\mathbb R}}
\begin{document}
\title{Homogenization of a semilinear heat equation}
\author{Annalisa Cesaroni\footnote{Dipartimento di Scienze Statistiche, 
Universit\`a di Padova, Padova, Italy, 
e-mail: annalisa.cesaroni@unipd.it},
Nicolas Dirr\footnote{Cardiff School of Mathematics, Cardiff University, Cardiff, UK,
e-mail: dirrnp@cardiff.ac.uk.}, 
Matteo Novaga\footnote{Dipartimento di Matematica, 
Universit\`a di Pisa, Pisa, Italy, 
e-mail: matteo.novaga@unipi.it} }

\date{}

\maketitle

\begin{abstract}
We consider the homogenization of a semilinear heat equation with vanishing viscosity and with oscillating positive potential depending on $u/\eps$. 
According to the rate between the frequency of oscillations in the potential and the vanishing factor in the viscosity, we obtain 
different regimes in the limit  evolution and we discuss the locally uniform convergence of the solutions to the effective  problem.  The interesting feature of the model is that 
 in  the strong diffusion regime the effective operator is discontinuous in the gradient entry. We get a complete characterization of the limit solution in dimension $n=1$, whereas in 
dimension  $n>1$ we discuss the main properties  of the solutions to the effective problem selected at the limit  and we prove uniqueness for some classes of initial data.    \end{abstract}

\tableofcontents

\section{Introduction} 
 We consider the following  problem: \begin{equation}\label{eqintro}  
\begin{cases} u^\eps_t    - \eps^\alpha \Delta \ue- g\left(\frac{\ue}{\eps } \right)=0 & \text{in }  \R^n\times (0, +\infty) \\  u^\eps(x,0)=u_0(x) & \text{in }  \R^n  \end{cases} \end{equation}  
where $\alpha\geq 0$ and the potential $g$ is a periodic, Lipschitz continuous and positive function.
This is a simple model for the motion of an interface in a heterogeneous medium, modeled  by $g$. These kind of equations arise
e.g. in the study of the propagation of flame fronts in a solid medium having horizontal periodic striations, see the appendix of \cite{nr} for  a survey of the physical
background motivating this equation (see also \cite{cna,m}). 

In this paper we show that, depending on the value of $\alpha$, different regimes arise in the limit evolution. 
If $\alpha=1$, then   $u^\eps$ converges locally uniformly to the unique Lipschitz 
continuous viscosity solution to 
\begin{equation}\label{limitintro}
\begin{cases} 
u_t-  \bar c(|\nabla u|) =0\quad \text{in }  \R^n\times (0, +\infty)
\\
u(x,0)=u_0(x),\end{cases}
\end{equation}
where   $\bar c:[0, +\infty)\to [0, +\infty)$  is a continuous, nondecreasing and nonnegative function, 
which  satisfies  \[\bar c(0)=  \left(\int_0^1 \frac{1}{g(s)}ds\right)^{-1} \qquad \text{and}  \qquad \lim_{|p|\to +\infty} \bar c(|p|)
=\int_0^1 g(s)ds.\] 
In particular $$\left(\int_0^1 g(s)^{-1}ds\right)^{-1}\leq \bar c (|p|)\leq \int_0^1 g(s)ds,$$ and the second inequality is strict if $g$ is nonconstant. 
 Since the solution $u$ to \eqref{limitintro} is Lipschitz in $x$, with the same Lipschitz constant of the initial datum, 
necessarily the average speed is  less than $\bar c(\|\nabla u_0\|_\infty)$. 

In the case $ \alpha>1$  the limit problem is very simple and reads 
\begin{equation}\begin{cases} 
u_t= \left(\int_0^1 g(s)^{-1}ds\right)^{-1}\quad \text{in }  \R^n\times (0, +\infty)
\\
u(x,0)=u_0(x).\end{cases}\end{equation}
In particular the solutions to \eqref{eqintro} converge locally uniformly to
$u_0(x)+t \left(\int_0^1 g(s)^{-1}ds\right)^{-1}$. 

In the case $0<\alpha<1$, the limit problem is 
\begin{equation}\label{limit01}
\begin{cases} 
u_t-  \bar c_{-}(|\nabla u|) =0\quad \text{in }  \R^n\times (0, +\infty)\\
u(x,0)=u_0(x),\end{cases}\end{equation}
where \[\bar c_-(|p|)= \begin{cases}  \int_0^1 g(s)ds & p\neq 0\\  \left(\int_0^1  g(s)^{-1}ds\right)^{-1} & p=0.\end{cases}\]
In   the limiting case $\alpha=0$, the limit problem is given by 
\begin{equation}\label{limit0}
\begin{cases}  u_t- \bar F(\nabla u, \nabla^2 u) =0\quad \text{in }  \R^n\times (0, +\infty)\\
u(x,0)=u_0(x) \end{cases}\end{equation}  where  \[  \bar F(p, X))= \begin{cases}  \tr X+\int_0^1 g(s)ds & p\neq 0\\  
\min\left(\tr X+\int_0^1 g(s)ds, \left(\int_0^1  g(s)^{-1}ds\right)^{-1}\right) & p=0.\end{cases}\] 
%with $\tr X$ denotes the trace of the matrix $X$. 
The functions   $\bar c_- $ and $\bar F$ are both   {\it discontinuous} functions,  such phenomenon is unusual in homogenization problems, and 
makes the analysis of this limit more challenging.

Due to the lack of uniqueness of  solutions to Hamilton-Jacobi equations with discontinuous Hamiltonian, in this case we prove that along subsequences the solution $u^\eps_\alpha$ to \eqref{eqintro} converges locally uniformly to a viscosity solution of the limit problem.  We also provide a quite detailed description of which are the solutions of the discontinuous problem selected 
in the limit, and  we identify the asymptotic  speed  of propagation at strict  maxima, at strict minima and at saddle points (with respect to $x$) of the limit function. This result allows us to obtain a complete description of the limit function for some classes of  initial data.  In particular, if the initial data is either monotone in one direction or convex, we prove that the solutions to \eqref{eqintro} converge locally uniformly to
$u_0(x)+t \int_0^1 g(s)ds$ when $\alpha\in (0,1)$, and to the  solution to $u_t-\Delta u= \int_0^1 g(s)ds$ with initial datum $u_0$ when $\alpha=0$. If, on the other hand, the initial data is  a radially simmetric function, which has a unique maximum point, then the limit function is given, for $\alpha\in (0,1)$,  by $\min\left(u_0(x)+t\int_0^1 g(s)ds, \ \max u_0+ t\left(\int_0^1 g(s)^{-1}ds\right)^{-1}\right)$.  As a consequence we show further properties of the limit function $u$, 
when the initial datum is bounded from above. 

In particular, in the one-dimensional case, we are able to prove the full convergence of the solutions 
$u^\eps_\alpha$ and the uniqueness of the limit function $u$ for $\alpha\in (0,1)$, see Theorem \ref{teounicita}. 

\smallskip

Our homogenization results are based on maximum principle type arguments. In particular we provide the effective limit problem through the solution of the so-called cell problem, and then we prove the convergence of solutions  by a suitable adaptation of the perturbed test function method proposed by Evans.  
The cell problem in our case reduces to an ordinary differential equation, see \eqref{cella}, and  is obtained by making a formal asymptotic expansion in $\eps$  of the solutions to \eqref{eq}.   It  permits to define the limit differential operator and to introduce the so-called correctors, which play the role of local barriers for the evolution.  

Throughout the paper we shall assume that the potential $g$ is strictly positive, nevertheless we expect that similar results hold also in the case of a function $g$ which possibly changes sign and satisfies $\int_0^1 g(s)ds>0$. In this case, though, the analysis of the cell problem is much more involved.  In the limiting case, i.e. when  $\int_0^1 g(s)ds=0$, the cell problem has been studied in \cite{j} and \cite{m}, 
for $\alpha\ge 1$, and  it is possible to prove that the solution  $u^\eps(x,t)$  of \eqref{eqintro} converges locally uniformly to the initial datum $u_0(x)$.
More precisely, in \cite{j}  
the  following long time rescaling of \eqref{eqintro} has been considered:
\begin{equation}\label{jerr}u^\eps_t    - \Delta \ue+ \frac{1}{\eps^\alpha} g\left(\frac{\ue}{\eps } \right)=0, \end{equation} 
showing that  $u^\eps$ converge locally uniformly to a solution of a quasilinear parabolic equation, 
see \eqref{jerrlimit}, which for $\alpha>1$ is the level set mean curvature equation. 
In \cite{m}, the $1$-dimensional case has been considered for $\alpha=1$, in a more general setting.

Homogenization of periodic structures has been studied by viscosity solution methods in a long series of papers, we just recall \cite{cm, cls, cnv, ls} and references therein. However, only few papers deal  with homogenization of equations depending (periodically) on $u/\eps$, as in our case, besides the already cited  works \cite{j,m}. For first order Hamilton-Jacobi equations we recall \cite{im, bar}. Eventually, in \cite{ibm, p}  the homogenization of ordinary differential equations 
such as $u_\eps'(t)=g\left(\frac{t}{\eps},\frac{u_\eps}{\eps}\right)$ have been studied, using  respectively   viscosity solutions and $G$-convergence methods.
 
One of the main step to solve the homogenization problem is the identification of the limit operator, as we already noted. This is done by solving a suitable defined cell problem, or equivalently, by looking to periodic pulsating wave solutions to the equation \eqref{eqintro}, at the microscopic scale. 
Pulsating wave solutions with (average) slope $p\in\R^n$ are solutions to \eqref{eqintro} with $\eps=1$ of the form $\phi(x,t)-c(p)t$, where $\phi(x,t)-p\cdot x$ is a space-time periodic function and  $c(p)$ is the (average) speed of the solution. 
Notice that, since $g$ depends only on $u$, these pulsating waves are in fact traveling waves
which moves horizontally in the $p$-direction.
Such solutions are related to the correctors used in homogenization problems and are very important in the analysis of long time behavior of the  solutions to  \eqref{eqintro}, with $\eps=1$, since typically they are the long time attractors of such solutions, see for instance \cite{cn, cdn, dky, dy, cna, nr}. 
In particular in \cite{nr}, it is proved  the existence of horizontal (e.g. with slope $p=0$)  pulsating wave solutions to $u_t=\Delta u+ g(x,u, \nabla u)$, where $g$ is a positive function, which is periodic in $x,u$. The same argument  also applies to get existence of pulsating wave solutions for rational slopes $p\in\Q^n$. In \cite{cna}  a similar problem has been studied in the plane, that is existence for any slope $p$ of pulsating wave solutions (which are traveling horizontally) to 
\begin{equation}\label{eqcna}
u_t=\delta u_{xx} + g(u)\sqrt{1+u_x^2}, 
\end{equation}
with $g$ strictly positive. The authors also provide a complete description of the asymptotic speed of propagation $c(p)$, showing that it is increasing with respect to $|p|$ (as in our case) and looking also at the limit behavior as the viscosity is vanishing, that is $\delta\to 0$.
Eventually, in \cite{lou,lou2} a geometric variant of \eqref{eqcna} has been considered, for which the author is able to construct planar and V-shaped pulsating waves.

\smallskip

The paper is organized as follows. In Section \ref{notation}  we recall some notation used in the paper,
including the definition of viscosity solution. In Section \ref{sec1}, we introduce the problem and the assumptions and provide a priori estimates on solutions to \eqref{eqintro} and on their uniform limits. Section \ref{seccell} is devoted to the solution to the cell problem, in the case $\alpha=1$ and then in the case $\alpha\neq 1$, and on the analysis of qualitative properties of the limit operators. In Section \ref{secconv} we prove the main results, that is the homogenization limits. Eventually in Section \ref{secopen} we  discuss some open problems, which in our opinion could be interesting to investigate.  

\smallskip

\paragraph{Acknowledgements.}
The first author was partially supported by the GNAMPA Project 2016 "Fenomeni asintotici e omogeneizzazione" and by the research project of the University of Padova "Mean-Field Games and Nonlinear PDEs". 
The second author was partially supported by the University of Padova, the Leverhulme Trust RPG-2013-261 and EPSRC EP/M028607/1.
The third author  was partially supported by the University of Pisa via grant PRA-2015-0017.
The second and third authors wish to thank the University of Padova for the kind hospitality during the preparation of this work.

\section{Notation and preliminary definitions} \label{notation}
Given $z\in \R$, we will denote with $[z]$ the smallest integer bigger than $z$: \[[z]\in \Z,\qquad [z]-1<z\leq [z].\]

Given a smooth function $u(x,t):\R^n\times (0, +\infty)\to \R$, we will denote with $u_t $ the partial derivative with respect to $t$, with $\nabla u $, $\nabla^2 u $, $\Delta u $ resp. the gradient, the Hessian and the Laplacian of $u$ with respect to $x$. 

Given a continuous function $u:\R^n\times (0, +\infty)\to \R$, we recall the definition of the sub and superjets of $u$ at a point $(x_0,t_0)\in \R^n\times (0, +\infty)$ (see \cite{bcd}, \cite{gigabook}): 
\[D^+ u(x_0, t_0):=\{(\nabla \phi(x_0, t_0), \nabla^2 \phi(x_0,t_0), \phi_t(x_0,t_0)) :\  \phi\in C^2, \phi\geq u, \phi(x_0,t_0)=u(x_0,t_0) \},\]
\[D^- u(x_0, t_0):=\{(\nabla \phi(x_0, t_0), \nabla^2_x \phi(x_0,t_0), \phi_t(x_0,t_0)) :\  \phi\in C^2, \phi\leq u, \phi(x_0,t_0)=u(x_0,t_0) \}.\] 

We recall the definition of viscosity solution for a parabolic  system 
\begin{equation}\label{visco} \begin{cases} u_t-F(\nabla u, \nabla^2 u)=0 & \text{in }\R^n\times (0, +\infty)\\ u(x,0)=u_0(x)& \text{in }\R^n, \end{cases} \end{equation} where the differential operator $F$ is possibly discontinuous (see \cite{gigabook}).  Given a continuous function  $u:\R^n\times [0, +\infty)\to \R$, then \begin{description} 
\item 
$u$ is a subsolution   to \eqref{visco} 
if $u(x,0)\leq u_0(x)$ and $\lambda-F^\star(p, X)\leq 0$, for every $(x_0, t_0)\in \R^n\times (0, +\infty)$ and $(p, X, \lambda)\in D^+ u(x_0, t_0)$,
\item $u$ is a supersolution   to \eqref{visco} 
if $u(x,0)\geq u_0(x)$ and $\lambda-F_\star(p, X)\geq 0$, for every $(x_0, t_0)\in \R^n\times (0, +\infty)$ and $(p, X, \lambda)\in D^- u(x_0, t_0)$,\end{description} 
where $F^\star$ and $F_\star$ denote respectively the upper and lower semicontinuous envelopes of $F$. 

%Given a Lipschitz continuous function $u:\R^n\times (0, +\infty)\to \R$ and a vector $\eta\in \R^n$ with $|\eta|=1$ we define the (one-sided)  directional derivative of $u$ in the $\eta$ direction as 
%\[\partial_\eta u(x,t):=\lim_{r\to 0^+} \frac{u(x+r\eta,t )-u(x,t)}{r}.\] 
%Classical directional derivatives exist for Lipschitz continuous functions, see \cite{bcd}. 

\section{Assumptions and basic estimates} \label{sec1} 
We assume the following conditions on the forcing term  $g:\R \to \R$:
\begin{equation}\label{g}\text{  $g$ is Lipschitz continuous   ,  $\Z$ periodic, and $g(y)> 0$ for every $y$.}\end{equation}

We consider the following Cauchy problem
 \begin{equation}\label{eq}  
\begin{cases} u^\eps_t    - \eps^\alpha\Delta \ue- g\left(\frac{\ue}{\eps } \right)=0 & \text{in }  \R^n\times (0, +\infty) \\  u^\eps(x,0)=u_0(x) & \text{in }  \R^n  \end{cases} \end{equation}  
where $\alpha\geq 0$ and  \begin{equation}\label{id}\text{  $u_0$ is a Lipschitz continuous function, with Lipschitz constant  $L$.}\end{equation} We can assume without loss of generality that  $L\in \N$. 
In the case $\alpha=0$, we make the additional assumption that $u_0\in C^{1,1}$. 

\begin{proposition} \label{seq} 
Assume \eqref{g} and \eqref{id} and let $\alpha\geq 0$. Then  \eqref{eq} admits a unique solution $u^\eps_\alpha\in C^{2+\gamma, 1+\gamma/2}$ for all $\gamma\in (0,1)$. 
Moreover, up to a subsequence, 
\[u^\eps_\alpha\to u \qquad \text{locally  uniformly in $\R^n\times [0,+\infty)$}.\] 
 For $\alpha>0$, every limit function  $u$ is   a  Lipschitz continuous function, which satisfies 
\begin{equation}\label{estlip} |u(x,t)-u(y,s)|\leq L|x-y|+\|g\|_\infty |t-s| \qquad \forall x,y\in \R^n, \ t,s\geq 0.\end{equation}
For $\alpha=0$, under the additional assumption that $u_0\in C^{1,1}(\R^n)$,  every limit function $u$ is   a  Lipschitz continuous function, which satisfies 
\begin{equation}\label{estlip2} |u(x,t)-u(y,s)|\leq L|x-y|+(\|g\|_\infty+\|\nabla^2 u_0\|_\infty) |t-s| \qquad \forall x,y\in \R^n, \ t,s\geq 0.\end{equation}

Finally, if there exist $\eta\in\R^n$, with $|\eta|=1$, and $\delta>0$ such that 
\begin{equation} \label{mono} 
\nabla u_0(x)\cdot\eta\ge\delta \qquad 
\text{for a.e. }x\in\R^n,
\end{equation} 
then  $\nabla u(x,t)\cdot\eta\ge \delta$  for a.e. $(x,t)$. 
\end{proposition}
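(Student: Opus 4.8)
The plan is to treat \eqref{eq}, for each fixed $\eps>0$, as a semilinear parabolic equation with bounded Lipschitz nonlinearity $u\mapsto g(u/\eps)$, and to extract every uniform estimate from one structural identity together with the parabolic comparison principle. For existence, uniqueness and the stated interior regularity I would invoke the standard theory for semilinear heat equations: since $0<g\le\|g\|_\infty$ the reaction term is bounded, so the solution cannot blow up and exists globally; since $g$ is Lipschitz the map $u\mapsto g(u/\eps)$ is Lipschitz (with constant $\mathrm{Lip}(g)/\eps$), so a fixed-point/semigroup argument produces a unique bounded solution, and parabolic Schauder estimates upgrade it to $C^{2+\gamma,1+\gamma/2}$ for $t>0$, continuous down to $t=0$. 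The comparison principle holds for two solutions $u,v$ of \eqref{eq}: the difference $w=u-v$ solves $w_t=\eps^\alpha\Delta w+c(x,t)w$ with $|c|\le\mathrm{Lip}(g)/\eps$ bounded, so after the substitution $\wt w=e^{-Mt}w$ (with $M=\mathrm{Lip}(g)/\eps$) the zeroth-order coefficient becomes nonpositive and the maximum principle gives $w(\cdot,0)\le0\Rightarrow w\le0$; the at-most-linear growth of the data keeps $w$ bounded, so the principle is valid on all of $\R^n$. Uniqueness follows at once.

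The identity that drives all the estimates is that, because $g$ is $\Z$-periodic, \emph{$u^\eps+k\eps$ solves \eqref{eq} whenever $u^\eps$ does}, for every $k\in\Z$: adding $k\eps$ leaves $\eps^\alpha\Delta u^\eps$ unchanged and turns $g(u^\eps/\eps)$ into $g(u^\eps/\eps+k)=g(u^\eps/\eps)$. This lets me compare a solution with the vertical shifts $u^\eps+k\eps$ of another solution of the \emph{same} equation, so that the dangerous $1/\eps$ coefficient never enters the comparison and only a harmless additive $O(\eps)$ survives. For the spatial estimate I fix $h\in\R^n$ and compare $u^\eps(\cdot+h,\cdot)$ with $u^\eps+k\eps$, where $k=\lceil L|h|/\eps\rceil$. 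By \eqref{id} we have $u^\eps(x+h,0)=u_0(x+h)\le u_0(x)+L|h|\le u_0(x)+k\eps$, so comparison yields $u^\eps(x+h,t)\le u^\eps(x,t)+k\eps\le u^\eps(x,t)+L|h|+\eps$, and the reverse bound follows by exchanging $x$ and $x+h$. Letting $\eps\to0$ along the convergent subsequence gives $|u(x+h,t)-u(x,t)|\le L|h|$, the spatial part of \eqref{estlip}--\eqref{estlip2}.

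For the temporal estimate I first bound the increment from the initial time and then propagate it by the same shift, using that \eqref{eq} is autonomous. Since $g\le\|g\|_\infty$, the function $e^{t\eps^\alpha\Delta}u_0+\|g\|_\infty t$ is a supersolution and, since $g>0$, $e^{t\eps^\alpha\Delta}u_0$ is a subsolution, so comparison traps $u^\eps$ between them. When $\alpha>0$ and $u_0$ is only Lipschitz, the heat-kernel bound $|e^{s\Delta}u_0-u_0|\le C_nL\sqrt s$ with $s=t\eps^\alpha$ gives $|u^\eps(x,h)-u_0(x)|\le\|g\|_\infty h+C_nL\eps^{\alpha/2}\sqrt h$, whose correction vanishes as $\eps\to0$; when $\alpha=0$ and $u_0\in C^{1,1}$, the same barriers give $|u^\eps(x,h)-u_0(x)|\le(\|g\|_\infty+\|\nabla^2u_0\|_\infty)h$. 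Calling $C$ the resulting constant, I choose $k=\lceil Ch/\eps\rceil$ and compare $u^\eps(\cdot,\cdot+h)$ with $u^\eps+k\eps$: at $t=0$ we have $u^\eps(x,h)\le u_0(x)+k\eps$, so by autonomy and comparison $u^\eps(x,t+h)\le u^\eps(x,t)+k\eps\le u^\eps(x,t)+Ch+\eps$, and passing to the limit gives the upper increment. The lower increment is obtained identically from the subsolution barrier, and in the limit it is in fact nonnegative since $g>0$; this yields \eqref{estlip} for $\alpha>0$ and \eqref{estlip2} for $\alpha=0$. These uniform space--time bounds make $\{u^\eps\}$ locally equi-Lipschitz, so Arzel\`a--Ascoli produces a locally uniformly convergent subsequence whose limit inherits the stated estimates.

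The monotonicity statement follows from the same mechanism: if $\nabla u_0\cdot\eta\ge\delta$ then $u_0(x+s\eta)\ge u_0(x)+\delta s$ for $s>0$, and choosing $k=\lfloor\delta s/\eps\rfloor$ gives $u_0(x+s\eta)\ge u_0(x)+k\eps$, whence $u^\eps(x+s\eta,t)\ge u^\eps(x,t)+k\eps>u^\eps(x,t)+\delta s-\eps$; letting $\eps\to0$ gives $u(x+s\eta,t)\ge u(x,t)+\delta s$, i.e. $\nabla u\cdot\eta\ge\delta$ a.e. The only genuine difficulty in the whole proposition is the uniformity in $\eps$ of the Lipschitz bounds: a direct linearization of \eqref{eq} produces a zeroth-order coefficient of size $\mathrm{Lip}(g)/\eps$ and hence estimates that degenerate as $\eps\to0$. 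The periodicity identity $u^\eps+k\eps$ is exactly what removes this obstruction, converting each comparison into one between two solutions of the same equation and reducing all the $\eps$-dependence to the negligible additive gaps $k\eps-L|h|$, $k\eps-Ch$, and $\delta s-k\eps$, each lying in $[0,\eps)$. I expect the residual technical points---global solvability on $\R^n$, the maximum principle for bounded solutions on an unbounded domain, and the heat-kernel modulus of continuity for Lipschitz data---to be routine.
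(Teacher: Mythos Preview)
Your proof is correct and follows essentially the same approach as the paper: both hinge on the periodicity identity that $u^\eps+k\eps$ solves \eqref{eq} whenever $u^\eps$ does, and use it together with comparison to obtain the $\eps$-uniform spatial, temporal, and monotonicity estimates. The only noteworthy variation is in the time estimate: the paper first mollifies $u_0$ to have bounded Hessian and uses the linear barriers $u_0(x)\pm(\|g\|_\infty+\eps^\alpha\|\nabla^2 u_0\|_\infty)t$, whereas you work directly with Lipschitz $u_0$ via the heat-semigroup barriers $e^{t\eps^\alpha\Delta}u_0$ and $e^{t\eps^\alpha\Delta}u_0+\|g\|_\infty t$; in either case the correction to the constant $\|g\|_\infty$ is $O(\eps^\alpha)$ (or $O(\eps^{\alpha/2})$ in your Hölder-type modulus) and disappears in the limit, so the two arguments are interchangeable.
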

\begin{proof} 
Due  to the Lipschitz regularity of $g$, a standard comparison principle among sub and supersolutions to \eqref{eq} holds (see \cite{pw}). So, existence and uniqueness of solutions to \eqref{eq} follow easily, and the regularity comes from standard elliptic regularity theory (see \cite{pw}).
 
Assume now that the initial datum $u_0$ has bounded Hessian. Indeed it is not restrictive, since we can  uniformly approximate the initial datum  with   a  sequence  of  smooth  functions with  bounded  Hessian. The comparison principle implies that the associated sequence of solutions converges locally uniformly to the solution to \eqref{eq} in $\R^n\times [0, +\infty)$.

Let $C=\|\nabla^2 u_0\|_\infty$. Then for every $\eps$, the functions $u_0(x)\pm (\|g\|_\infty +\eps^\alpha C)t$ are respectively super and subsolution to \eqref{eq}, which implies by the comparison principle that 
$$|u^\eps_\alpha(x,t)-u^\eps_\alpha(x,0)|\leq (\|g\|_\infty +C\eps^\alpha )t.$$ 
Hence, again applying the comparison principle, we get that for every $t,s\geq 0$ 
\begin{equation}\label{s2} |u^\eps_\alpha(x, t+s)-u^\eps_\alpha(x,t)|\leq \sup_x |u^\eps_\alpha(x,s)-u^\eps_\alpha(x,0)|\leq(\|g\|_\infty +C\eps^\alpha)s.\end{equation} 
This implies that $u^\eps$ are equi-lipschitz in $t$. 

We prove now uniform equi-continuity in $x$.
Let us consider the functions 
$$w_\pm^\eps(x,t):=u^\eps_\alpha(x+z,t) \pm\left[\frac{ L|z|}{\eps}\right]\eps.$$  
Notice that  $w^\eps_\pm$ are both  solutions to the equation in \eqref{eq}, due to the periodicity of $g$. Moreover, we have $w^\eps_+(x,0) =u_0(x+z)+\left[\frac{ L|z|}{\eps}\right]\eps\geq u_0(x+z)+L|z|\geq u_0(x)$ and $w^\eps_-(x,0)=u_0(x+z)-\left[\frac{ L|z|}{\eps}\right]\eps\leq u_0(x+z)-L|z|\leq u_0(x)$. 
By comparison principle this implies that 
\begin{equation}\label{s1}  |u^\eps_\alpha(x +z, t) -u^\eps_\alpha(x,t)| \leq \left[\frac{ L|z|}{\eps}\right]\eps\leq L|z|+\eps \qquad \forall x,z\in\R^n, \ t\geq 0.
\end{equation}
In particular, if we take $z=\eps \bar z$ where $\bar z\in \Z^n$, then \eqref{s1} gives
\[ \frac{|u^\eps_\alpha(x +\eps \bar z, t) -u^\eps_\alpha(x,t)| }{\eps |\bar z|}\leq L \qquad \forall x\in\R^n, \bar z\in \Z^n,\ t \geq 0.\]

For every $\eps>0$ we consider a Lipschitz continuous  function  $\tilde u^\eps_\alpha$, which satisfies \eqref{s2}, $ |\tilde u^\eps_\alpha(x,t)- \tilde u^\eps_\alpha(y,t)|\leq L|x-y|$ for every $x,y\in\R^n$ and $t\geq 0$,and  such that $u^\eps_\alpha\equiv \tilde u^\eps_\alpha$ on  the lattice $\eps\Z^n\times (0, +\infty)$.  
This implies that $\|u^\eps_\alpha-\tilde u^\eps_\alpha\|_\infty\leq K\eps$. Indeed, let $x\in \R^n$ and $t\geq 0$. Fix $y_\eps\in \eps\Z^n$ such that $|x-y_\eps|\leq \eps$. Then, using \eqref{s1} and the definition of $\tilde u^\eps_\alpha$, we get  
\begin{align*}
|u_\alpha^\eps(x,t)-\tilde u_\alpha^\eps(x,t)|&\leq |u^\eps_\alpha(x,t)-  u^\eps_\alpha(y_\eps,t)| +
 |\tilde u^\eps_\alpha(x,t)- \tilde u^\eps_\alpha(y_\eps,t)|
 \\
 &\leq L|x-y_\eps|+\eps+L|x-y_\eps|\leq (2L+1)\eps.
 \end{align*}
By Ascoli-Arzel\'a Theorem, up to subsequences $\tilde u^\eps_\alpha\to u$  uniformly,  then also $u^\eps_\alpha$ converges uniformly to the same function, which, by \eqref{s1} and \eqref{s2}, satisfies \eqref{estlip} if $\alpha>0$ and \eqref{estlip2} if $\alpha=0$.   

Finally, 
%in the case that $u_0$ is a $C^2$ function with bounded Hessian, we get that 
condition \eqref{mono} is equivalent to require that  
$u_0(x+\eta r)-\delta r\geq u_0(x)$ for all $x\in \R^n$ and every $r>0$.
Let us fix $r>0$ and define, for all $\eps>0$,
%for every $\delta'\in (0,\delta)$ there exists $R>0$ such that $u_0(x+\eta r)-\delta' r\geq u_0(x)$ for all $x\in \R^n$ and every $r\in [0, R)$. Define for every $r\in (0, R)$, 
 the function 
 \[v^\eps(x,t):= u^\eps_\alpha(x+\eta r,t)-\left[\frac{\delta r}{\eps}\right]\eps .\] 
 Then $v^\eps(x,0)=  u_0(x+\eta r)-\left[\frac{\delta r}{\eps}\right]\eps \geq 
 u_0(x+\eta r)-\delta r\geq u_0(x)$ for every $x\in\R^n$. 
 Moreover, by the periodicity of $g$, $v^\eps$ it is also a solution to equation in \eqref{eq}. So, by comparison principle
 we get 
 \[
v^\eps(x,t)=
 u^\eps_\alpha(x+\eta r,t)-\left[\frac{\delta r}{\eps}\right]\eps \geq u^\eps_\alpha(x,t)\qquad \forall (x,t).
 \] 
Passing to the limit as  $\eps\to 0$, we obtain that 
%for every $\delta'<\delta$ there exists $R>0$ such that for all $r\in (0, R)$ the following holds:  
\[u(x+\eta r,t)-\delta r \geq u (x,t)\qquad \forall (x,t),\] 
which gives the thesis.
\end{proof} 

We now recall a well-known result of the theory of viscosity solutions (see \cite{bcd}).
\begin{proposition}\label{limitprop}  Let $\bar c:[0, +\infty)\to [0, +\infty)$ be a continuous function, and let $u_0$ as in \eqref{id}. Then there exists a unique  Lipschitz continuous viscosity solution
to \[\begin{cases} u_t-  \bar c(|\nabla u|) =0&  \text{in }  \R^n\times (0, +\infty)\\
u(x,0)=u_0(x) & \text{in }  \R^n.\end{cases}\] \end{proposition}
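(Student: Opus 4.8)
The equation is a first-order Hamilton--Jacobi equation of the form $u_t+H(\nabla u)=0$ with geometric, gradient-only Hamiltonian $H(p)=-\bar c(|p|)$, which is continuous on $\R^n$ since $\bar c$ is continuous. My plan is to treat uniqueness by a comparison principle and existence by Perron's method, and then to upgrade the resulting solution to a Lipschitz one by exploiting the translation invariance of the equation in $x$ and $t$.

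For uniqueness I would establish comparison between a bounded subsolution $u$ and a supersolution $v$ with $u(\cdot,0)\le v(\cdot,0)$, via the standard doubling of variables: one maximizes $u(x,t)-v(y,s)-\frac{1}{2\eps}(|x-y|^2+|t-s|^2)$, augmented by the usual penalizations in $t$ (to control the behaviour as $t\to 0$) and in $x$ (to ensure a maximum exists on $\R^n$), and then lets the parameters vanish. The crucial simplification here is that the operator $\bar c(|p|)$ involves no Hessian and no $x$-dependence, so at the maximizing point the two viscosity inequalities carry the \emph{same} spatial slope $p=(x-y)/\eps$; the contributions $\bar c(|p|)$ cancel exactly and only the time-derivative terms survive, which the penalization controls. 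No modulus-of-continuity hypothesis on $H$ in $x$ is required, and the argument is precisely the classical one in \cite{bcd}. Comparison yields $u\le v$, hence uniqueness, and shows that any solution attains the initial datum.

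For existence I would run Perron's method between two explicit barriers. Let $C:=\max_{0\le s\le L}\bar c(s)$, which is finite and $\ge 0$. The function $u_0(x)$, constant in $t$, is a subsolution: a smooth test function touching it from above at an interior point $(x_0,t_0)$ has vanishing time derivative there, so the subsolution inequality reduces to $-\bar c(|p|)\le 0$, true because $\bar c\ge 0$. Symmetrically $u_0(x)+Ct$ is a supersolution: a test function touching it from below has time derivative $C$ and spatial slope $p$ with $|p|\le L$ (it lies below a map that is $L$-Lipschitz in $x$), so the supersolution inequality reads $C-\bar c(|p|)\ge C-\max_{[0,L]}\bar c=0$. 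Both barriers equal $u_0$ at $t=0$, which forces the initial datum to be attained. Defining $u$ as the supremum of all subsolutions squeezed between these barriers, Perron's method together with the comparison principle produces a continuous viscosity solution of the Cauchy problem.

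Finally I would extract the Lipschitz bounds from invariance and comparison. Since $\bar c(|\nabla u|)$ depends neither on $x$ nor on $t$, for every $h\in\R^n$ the translate $u(\cdot+h,\cdot)$ is again a solution, with initial datum $u_0(\cdot+h)\le u_0+L|h|$; comparison (adding the constant $L|h|$) then gives $|u(x+h,t)-u(x,t)|\le L|h|$, so the solution keeps the Lipschitz constant $L$ of $u_0$ in space. In time, the lower barrier gives $u(x,\tau)\ge u_0(x)=u(x,0)$ and the upper one gives $u(x,\tau)\le u_0(x)+C\tau$; comparing the solution $u(\cdot,\cdot+\tau)$ with $u(\cdot,\cdot)$ and with $u(\cdot,\cdot)+C\tau$ yields $0\le u(x,t+\tau)-u(x,t)\le C\tau$, i.e. Lipschitz continuity in $t$ with constant $C$. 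I expect the comparison principle to be the only genuinely technical ingredient, but the absence of any $x$-dependence in $\bar c(|\nabla u|)$ renders it routine, which is exactly why the statement is recalled as well known and can be attributed to \cite{bcd}.
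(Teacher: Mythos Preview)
The paper does not actually prove this proposition: it is stated as a well-known fact from viscosity solution theory, with a bare citation to \cite{bcd} and no argument given. Your proposal correctly reconstructs the standard proof that such a citation points to---comparison via doubling of variables (which is indeed routine here since $H(p)=-\bar c(|p|)$ carries no $x$-dependence and no Hessian), existence by Perron's method between the barriers $u_0$ and $u_0+Ct$ with $C=\max_{[0,L]}\bar c$, and Lipschitz regularity from translation invariance combined with comparison. One minor wording slip: you speak of ``a bounded subsolution'' when setting up comparison, yet the functions you later compare have linear growth; but you already flag the spatial penalization that handles this, so there is no genuine gap.
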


\section{Cell problem and asymptotic speed of propagation}\label{seccell} 
We make the formal ansatz that the solution to \eqref{eq} satisfies the following asymptotic expansion:
\begin{equation}\label{ex2} u^\eps_\alpha(x,t)= \eps \chi\left(\frac{u(x,t)}{\eps} \right)\end{equation} where the function $\chi:\R\to \R$ is such that 
 \begin{equation}\label{limitchi} \lim_{z\to \pm \infty} \frac{\chi (z )}{z}=1.\end{equation}  
Note that under this condition, \eqref{ex2} implies that $u^\eps_\alpha\to u$ locally uniformly as $\eps\to 0$.  
 
We compute  \[\begin{cases}(\ue_\alpha)_t= \chi_z u_t +O(\eps)\\
\nabla \ue_\alpha =  \chi_z \nabla u  +O(\eps )\\
 \nabla^2 \ue_\alpha = \frac{1}{\eps}h_{zz} \nabla u \otimes \nabla u +    \chi_{zp} \nabla^2 u \nabla u+ \chi_{z } \nabla^2u +O(\eps)    .
\end{cases}\]

Plugging the asymptotic expansion \eqref{ex2} into the equation \eqref{eq} and   putting $z= \frac{u(x,t)}{\eps}$, $p=\nabla u(x,t)$, $c=u_t(x,t)$ 
 we get the following cell problem, for every $\eps>0$ and $\alpha\geq 0$: 
for every $p\in\R^n$,  show that there exists  a unique constant $\bar c^\eps_\alpha(p)=\bar c_\alpha^\eps(|p|)$ such that the following problem has a solution $\chi=\chi^\eps_p(\cdot)$
\begin{equation}\label{cella} \begin{cases}  
\eps^{\alpha-1} \chi''(z) |p|^2 -\bar c_\alpha^\eps(|p|)\chi'(z) +g(\chi(z))=0 & z \in (0,1)\\ \chi(1)=\chi(0)+1 & \\ \chi'(0)=\chi'(1).\end{cases} \end{equation}
Note that if $\chi_p^\eps$ is a solution to \eqref{cell}, also $\chi_p^\eps(z+k) $ and $\chi_p^\eps(z)+n$ are  solutions for every $k\in \R$ and $n\in \Z$. 
Moreover the function $\chi_p^\eps$ extended to $\R$ satisfies \eqref{limitchi}. Observe that if $g$ is constant, that is $g\equiv \bar g$, then $\bar c_\eps(|p|)=\bar g$ for every $p$ and every $\eps$ and  $\chi_p^\eps(z)= z$. 

Finally, note that the cell problem \eqref{cella} can be reformulated in a more standard way  as follows.  Given $p\in\R^n$, $\alpha\geq 0$, $\eps>0$,  find the constant $\bar c_\alpha^\eps(|p|)$ for which the equation
\begin{equation}\label{cellveroa}  -|p|^2 \eps^{\alpha-1} w''(z)+\bar c_\alpha^\eps(|p|) (w'(z)+1) -g(w(z)+z)=0 \end{equation} admits a periodic solution $w_p^\eps$.
Given a solution  $w_p^\eps$ to \eqref{cellveroa}, defining  $\chi_p^\eps(z)= w_p^\eps(z)+z$, we obtain a solution to \eqref{cella} and viceversa.

\subsection{Case $\alpha=1$, effective Hamiltonian}
In this section we consider the case $\alpha=1$. Under this assumption, the cell problem reads as follows:
for every $p\in\R^n$,  show that there exists  a unique constant $\bar c(|p|)$ such that  there exists a solution $\chi=\chi_p(\cdot)$
\begin{equation}\label{cell} \begin{cases}   \chi''(z) |p|^2 -\bar{c}(|p|)\chi'(z) +g(\chi(z))=0 & z \in (0,1)\\ \chi(1)=\chi(0)+1 & \\ \chi'(0)=\chi'(1).\end{cases} \end{equation}
We can also state the cell problem using the equivalent formulation: given $p\in\R^n$,    find the constant $\bar c(|p|) $ for which there exists a periodic solution $w_p$ to 
\begin{equation}\label{cellvero}  -|p|^2   w''(z)+\bar c(|p|)(w'(z)+1) -g(w(z)+z)=0. \end{equation} 

In the following theorem we show that the cell problem has a (unique) solution.
\begin{theorem}\label{corrector} 
For every $p$ there exists a unique $\bar c(|p|)$ such that there exists a monotone increasing solution $\chi_p$ to  
\eqref{cell}, which is also unique up to horizontal translations.
\\ Moreover, the map $|p|\mapsto \bar c(|p|)$ is continuous, increasing  and positive, \begin{equation}\label{cp}\bar c(|p|) =\begin{cases} 
\int_0^1 g(\chi_p(z))dz=\frac{\int_0^1 g(s)ds }{ \int_0^1 (\chi_p'(z))^2 dz} & p\neq 0\\ \left(\int_0^1 \frac{1}{g(s)} ds\right)^{-1} &  p=0.\end{cases}\end{equation} 
\\ In particular   \begin{eqnarray} \label{limitcorrector0}   & \lim_{|p|\to 0} \bar c(|p|)=\left(\int_0^1 \frac{1}{g(s)} ds\right)^{-1} & \text{ and }  \lim_{|p|\to 0 } \chi_p(z)=\chi_0(z)\text{  in $C(\R)$}\\ \label{limitcorrectorinfty} & \lim_{|p|\to +\infty} \bar c(|p|)=\int_0^1 g(s) ds & \text{ and }   \lim_{|p|\to +\infty} \chi_p(z)=z   \text{  in $C^1(\R)$}, \end{eqnarray} with $c(|p|)<\int_0^1 g(s)ds$ if $g$ is nonconstant.
\end{theorem}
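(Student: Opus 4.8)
The plan is to treat the periodic boundary value problem \eqref{cell} as a problem in the phase plane, exploiting that, since $g$ depends only on $\chi$, the equation $|p|^2\chi''-\bar c\,\chi'+g(\chi)=0$ is autonomous in $z$. I would first dispose of the case $p=0$ separately: there the second order term disappears and \eqref{cell} reduces to $\bar c\,\chi'=g(\chi)$, which, because $g>0$, forces $\chi$ to be strictly increasing. Separating variables and using $\chi(1)=\chi(0)+1$ together with the $\Z$-periodicity of $g$ gives $\int_{\chi(0)}^{\chi(0)+1}\tfrac{d\sigma}{g(\sigma)}=\int_0^1\tfrac{ds}{g(s)}=\bar c^{-1}$, whence $\bar c(0)=\left(\int_0^1 g^{-1}\right)^{-1}$, while the condition $\chi'(0)=\chi'(1)$ holds automatically by periodicity of $g$. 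The only freedom is the value $\chi(0)$, which is exactly the horizontal translation, so $\chi_0$ is unique up to translation.

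For $p\ne0$ write $a=|p|^2>0$. Seeking a monotone increasing $\chi$, I pass to the phase plane by regarding $v=\chi'$ as a function of $s=\chi$; then $\chi''=v\,v'$ and \eqref{cell} becomes the scalar first order equation
\[a\,v\,v'(s)-c\,v(s)+g(s)=0,\qquad v>0,\]
where $c$ is to be determined, while the conditions $\chi(z+1)=\chi(z)+1$ and $\chi'(0)=\chi'(1)$ translate into: $v$ is $1$-periodic and the unit normalization $\int_0^1 v(s)^{-1}\,ds=1$ holds (this last integral being the $z$-length of one period). Conversely, inverting $z(s)=\int^s v^{-1}$ reconstructs from any positive $1$-periodic solution $v$ with $\int_0^1 v^{-1}=1$ a monotone corrector solving \eqref{cell}, unique up to the choice of $z$-origin. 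To produce the selected speed I fix $c>0$ and study the time-$1$ (Poincar\'e) map $P_c:v_0\mapsto v(1;v_0)$ on the set of $v_0$ for which the solution survives (stays positive) on $[0,1]$. Since $\partial_v\!\big(\tfrac1a(c-g/v)\big)=g/(av^2)>0$, the flow is order preserving and $P_c$ is strictly increasing with $P_c'>1$, so $P_c$ has at most one fixed point; a fixed point is precisely a positive $1$-periodic orbit. Existence of a fixed point comes from an intermediate value argument: for $v_0$ large the orbit stays above the nullcline $v=g(s)/c$ and increases, so $P_c(v_0)>v_0$, while just above the survival threshold the orbit dips near $0$ and returns below its start, so $P_c(v_0)<v_0$. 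This gives, for each admissible $c$, a unique periodic orbit $v_c$, increasing in $c$ by comparison (the vector field increases in $c$), so that $T(c):=\int_0^1 v_c^{-1}$ is strictly decreasing; a continuity argument in $c$ ($T(c)\to0$ as $c\to\infty$ and $T(c)>1$ for small admissible $c$) then selects the unique $\bar c(|p|)$ with $T=1$.

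Once a corrector is known to exist, the two expressions in \eqref{cp} follow by integration: integrating \eqref{cell} over $(0,1)$ and using $\chi'(0)=\chi'(1)$, $\chi(1)=\chi(0)+1$ gives $\bar c=\int_0^1 g(\chi_p)\,dz$, while multiplying by $\chi_p'$ before integrating and using periodicity of $g$ gives $\bar c\int_0^1(\chi_p')^2\,dz=\int_0^1 g$. Positivity of $\bar c$ is immediate from $g>0$. Continuity of $|p|\mapsto\bar c(|p|)$ I would obtain from uniqueness of the normalized periodic orbit together with the a priori $C^1$ bounds behind Proposition \ref{seq}: any family $v_{a_n}$ with $a_n\to a$ is precompact and its limit is the corrector for $a$, forcing $\bar c(a_n)\to\bar c(a)$, with the value at $a=0$ matching the explicit $\bar c(0)$. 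For the limits, as $a\to\infty$ I divide \eqref{cell} by $a$ so that $\chi''\to0$ and the shift condition forces $\chi_p\to z$ in $C^1$, giving $\bar c\to\int_0^1 g$; the strict inequality $\bar c<\int_0^1 g$ for nonconstant $g$ is then immediate from $\int_0^1(\chi_p')^2\ge\big(\int_0^1\chi_p'\big)^2=1$ (Cauchy--Schwarz), equality forcing $\chi_p'\equiv1$ and hence, through \eqref{cell}, $g$ constant. As $a\to0$ the reduced equation degenerates to the $p=0$ problem, and uniform bounds on $\chi_p'$ with uniqueness of $\chi_0$ yield $\bar c(|p|)\to\bar c(0)$ and $\chi_p\to\chi_0$.

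The main obstacle is the existence and uniqueness of the selected constant $\bar c(|p|)$ for fixed $p\ne0$: this is a nonlinear, traveling-wave-type eigenvalue problem, and the delicate points are controlling the survival threshold of the orbit near $v=0$ so as to close the intermediate value argument, and establishing the monotone dependence on $c$ used for uniqueness of the speed. The remaining sensitive point is the monotonicity of $\bar c$ in $|p|$: writing $\bar c(a)=\int_0^1 g/\int_0^1 v_a$, it reduces to showing that $\int_0^1 v_a$ is decreasing in $a$, which I expect to obtain either by differentiating the relations $a\,v_a'=\bar c-g/v_a$ and $\int_0^1 v_a^{-1}=1$ in $a$ and analyzing the sign of the resulting linear periodic problem, or by a direct sliding comparison between $v_{a_1}$ and $v_{a_2}$; together with $\bar c(0)=\left(\int_0^1 g^{-1}\right)^{-1}$ this also yields the lower bound $\bar c(|p|)\ge\left(\int_0^1 g^{-1}\right)^{-1}$.
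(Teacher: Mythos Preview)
Your proposal is correct in outline but takes a genuinely different route from the paper at the two key technical points. For existence when $p\neq0$, the paper does not pass to the phase plane; instead it makes the change of variable $\chi_p(z)=-h(-z/|p|)$, turning \eqref{cell} into $h''+c\,h'-g(h)=0$ with $c=\bar c/|p|$, and runs a shooting argument in the new variable: for each initial slope $a>0$ it integrates the ODE to obtain explicit exponential bounds on $h'$, selects $c(a)$ so that $h'$ returns to its initial value $a$ when $h$ has increased by $1$, and then varies $a$ so that the $z$-length of this period matches $1/|p|$. Your Poincar\'e-map analysis of $a\,v\,v'=c\,v-g$ achieves the same end; both approaches need a survival/continuity argument near the threshold, and both deliver the corrector. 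Where the paper is noticeably more economical is in the uniqueness of $\bar c(|p|)$ and its monotonicity in $|p|$: rather than proving that $T(c)$ is strictly monotone, or that $\int_0^1 v_a$ depends monotonically on $a$, the paper works in the periodic formulation \eqref{cellvero} and argues by contradiction at a minimum of $w_1-w_2$ (after a translation so the two solutions touch there), where $\chi'=w'+1>0$ immediately rules out $c_1<c_2$; the same three-line computation, now with $|p_1|>|p_2|$ and $w_1''(\bar z)\ge w_2''(\bar z)$, gives monotonicity in $|p|$. This bypasses entirely the ``sensitive'' differentiation or sliding step you flag. The remaining items---the integral identities \eqref{cp}, the Cauchy--Schwarz strict inequality, and the limits \eqref{limitcorrector0}--\eqref{limitcorrectorinfty} via compactness and stability---are handled essentially the same way in both.
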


\begin{proof} 
The proof is divided in several steps.  

\noindent {\bf Step 1: construction of a solution for $p=0$. } 

For $p=0$, we rewrite \eqref{cell} as follows
\begin{equation}\label{cell0} \begin{cases} \bar c(0)\chi'(z) -g(\chi(z))=0 & z \in (0,1)\\ \chi(1)=1, \ \chi(0)=0 & \\ \chi'(0)=\chi'(1).\end{cases} \end{equation} 
We integrate the equation between $0$ and $1$ and we get 
\[\int_{0}^{1} \frac{d\chi}{g(\chi)} =\frac{1}{\bar c(0)}\]
which gives the representation formula \eqref{cp}, and the uniqueness of $\bar c(0)$. 
 The solution $\chi_0$ is defined implicitly by  the formula \[\int_{0}^{\chi_0(z)} \frac{ds}{g(s)} =z\int_{0}^{1} \frac{ds}{g(s)} .\]

\noindent {\bf Step 2: construction of a solution for $p\neq 0$.  }

For $|p|\neq 0$, we perform the change of variable $\chi_p(z)=-h\left(-\frac{z}{|p|}\right)$, so the cell problem
\eqref{cell} reads
\begin{equation*} \begin{cases} h''(z)  +c(|p|)h'(z)- g(h(z))=0 & z\in \left(-\frac{1}{|p|},0 \right)\\ h(0)=0,\ \ h\left(-\frac{1}{|p|} \right)=-1 & \\
h'\left( -\frac{1}{|p|} \right)=h'(0),\end{cases} \end{equation*} 
where $c(|p|)=\overline{c}(|p|)/|p|$,
which is equivalent to 
\begin{equation}\label{cello} 
\begin{cases} h''(z)  +c(|p|)h'(z)- g(h(z))=0 & z\in \left(0,\frac{1}{|p|} \right)\\ h(0)=0,\ \ h\left(\frac{1}{|p|} \right)=1 & \\h'\left( \frac{1}{|p|} \right)=h'(0).\end{cases} \end{equation} 

Given   $c>0$ and $a>0$, let $\ha$ be  the unique solution to the ODE:
\begin{equation}\label{odeaiprodi} \begin{cases}  \ha''(z) +c\ha'(z) - g(\ha(z))=0 & z>0\\ 
\ha(0 )=0 &\\
\ha'(0)=a.
 \end{cases} \end{equation} 
Integrating \eqref{odeaiprodi}, for all $z>0$ we get the estimate
\begin{equation}\label{lasagne}
0 < a e^{-cz} + \frac{\min g}{c}\left( 1-e^{-cz}\right)\le \ha'(z)\le a e^{-cz} + \frac{\max g}{c}\left( 1-e^{-cz}\right).
\end{equation}
Let $\bar z :=\sup\{z:\ \ha(z)<1\}\in (0, +\infty)$.
Notice that from \eqref{lasagne} it follows that for  
$c$ small enough there holds $\ha'(z)>a$ for all $z>0$, whereas 
for $c$ big enough we have $\ha'(z)<a$ for all $z>0$.
As a consequence, for all $a>0$ there exists $c(a)>0$ such that 
\begin{equation}\label{millefoglie}
\frac{\min g}{a}\le \,c(a)\le \frac{\max g}{a} \qquad h_{a,c(a)}'\left( \bar z(a)\right)=a.
\end{equation}

From \eqref{lasagne} and \eqref{millefoglie} it also follows that 
\begin{equation}\label{parmigiana}
\frac{\min g}{c(a)}\le h_{a,c(a)}'(z)\le \frac{\max g}{c(a)}\qquad \forall z.
\end{equation}
Since $\int_0^{\bar z(a)}h_{a,c(a)}'(z)dz=1$, \eqref{parmigiana} yields 
\[
\frac{c(a)}{\max g}\le \bar z(a)\le \frac{c(a)}{\min g}\,,
\]
which gives
\begin{equation}\label{pastalforno}
\frac{\min g}{\max g}\,\frac 1a\le \bar z(a)\le \frac{\max g}{\min g}\,\frac 1a\,.
\end{equation}
In particular, there holds
\[
\lim_{a\to 0}\bar z(a)=+\infty\qquad \text{and}\qquad \lim_{a\to +\infty}\bar z(a)=0.
\]
Hence for all $|p|>0$ there exists at least one $a(|p|)$ such that $\bar z(a(|p|))=1/|p|$,
and the solution of \eqref{odeaiprodi} with $a=a(|p|)$ and $c=c(a(|p|))$ is also a solution of \eqref{cello}.

\smallskip
\noindent {\bf Step 3: uniqueness of $\bar c(|p|)$ and $\chi_p$.} 

The case $p=0$ has already been considered in Step 1. Assume by contradiction that there exists $p\in \R^n$, $p\neq 0$,  such that the problem \eqref{cellvero} admits two periodic  solutions $w_1, w_2$, with constants $c_1<c_2$. 
Let $\bar z$ a minimum point of $w_1-w_2$. Note that if $w$ is a periodic solution to \eqref{cellvero}, then $\tilde w (z)= w(z+k)+k$ is still a periodic solution of the same equation for all $k\in\R$.
So we can assume that $w_1(\bar z)=w_2(\bar z)$ and $w_1'(\bar z)=w_2'(\bar z)$. 

At this minimum point, recalling that $\chi'(z)=w'(z)+1> 0$, we have
\[0=-|p|^2 w_1''(\bar z)+c_1 (w_1'(\bar z)+1)-g(w_1(\bar z)+\bar z)\leq -|p|^2 w_2''(\bar z)+c_1 (w_2'(\bar z)+1)-g(w_2(\bar z)+\bar z)\]\[<-|p|^2 w_2''(\bar z)+c_2 (w_2'(\bar z)+1)-g(w_2(\bar z)+\bar z) =0\] which gives a contradiction and proves the uniqueness of $\bar c(|p|)$.

Let now $w_1, w_2$ be two solutions to \eqref{cellvero}, as above, with $w_1(\bar z)=w_2(\bar z)$ and $w_1'(\bar z)=w_2'(\bar z)$ for some $\bar z$.
By uniqueness of solutions to the Cauchy Problem associated to \eqref{cellvero}, it follows that $w_1=w_2$,
which yields the uniqueness of $\chi_p$ up to horizontal translations.

\smallskip 
\noindent {\bf Step 4: properties of $\bar c(|p|)$.}  

Note that integrating the equation \eqref{cell} in $(0,1)$ we get $\bar c(|p|)= \int_0^1 g(\chi_p(z))dz $ and from integrating \eqref{cell}  multiplied by $\chi'_p$ we get 
$\bar c(|p|)\int_0^1 (\chi_p'(z))^2 dz= \int_0^1 g(s)ds$, and then the representation formulas \eqref{cp}. 
In particular from $\bar c(p)=\int_0^1 g(\chi_p(z))dz$ we deduce that $\bar c(p)\geq \min g>0$. Moreover note that, if $g$ is nonconstant, then $\chi_p'(z)$ cannot be constant and   
\[ \int_0^1 (\chi_p'(z))^2 dz > \left(\int _0^1 \chi_p'(z) dz\right)^2 =1.\] So, by \eqref{cp}, we deduce that $\bar c(p)< \int_0^1 g(s)ds$ for every $p$. 

We prove continuity in $0$, since continuity in $p\neq 0$ is much simpler and follows the same argument. Let  $|p_n|\to0$, with $|p_n|\neq 0$ for every $n$. So $\bar c(p_n)$ is a bounded sequence and, by \eqref{cp}, \[\int_0^1 |\chi'_{p_n}(z)|^2 dz  \leq \frac{\int_0^1 g(s)ds }{\min g}.\] We recall that $\chi_{p_n}(z)\in [0,1]$ for $z\in [0,1]$, so this estimates give an apriori bound in $H^1(0,1)$ for $\chi_{p_n}$.  So, up to passing to a subsequence, we get that 
$\bar c(p_n)\to \tilde c$ and $\chi_{p_n}\to \chi$  locally uniformly. Then, by stability of viscosity solutions, $\chi$ is a solution to \eqref{cell0}, and by uniqueness $\tilde c=\bar c(0)$ and $\chi=\chi_0$. Moreover, since both $\chi_{p_n}(z)-z$ and $\chi_0(z)-z$ are periodic functions such that their difference converges locally uniformly to $0$, then we can conclude using periodicity that the convergence is uniform on $\R$. This gives  \eqref{limitcorrector0}. 

Now we prove \eqref{limitcorrectorinfty}. Reasoning as above, we get that $\bar c(|p|)$ and  $\chi'_p$ are equibounded respectively in $\R$ and in $L^2(0,1)$, uniformly with respect to $|p|$. By equation \eqref{cell} we get  $\chi''_p=\frac{\bar c(|p|)\chi'-g(\chi)}{|p|^2}$, so  the uniform $L^2$ bound on $\chi'_p$ implies  an uniform $L^2$ bound on 
$\chi''_p$, uniform in  $|p|>1$. Eventually passing to a subsequence,  $\bar c(|p|)\to c$,  $\chi_p\to \chi $ and $\chi'_p\to \chi'$ locally uniformly  as $|p|\to +\infty$. 
By stability of viscosity solutions,  we get that $\chi$ solves $ \chi''(z)=0$, with $\chi(0)=0$, $\chi(1)=1$. So $\chi(z)=z$. Moreover since $\chi_p(z)-z$ is a periodic function converging locally uniformly  in $C^1$ to $0$, we get that actually it converges uniformly in $C^1$ to $0$ in the whole $\R$. 
Therefore $g(\chi_p(z))\to g(z)$ uniformly and then  we conclude, using \eqref{cp}, that $\lim_{|p|\to +\infty} \bar c (|p|)=\lim_{|p|\to +\infty} \int_0^1 g(\chi_p(z))dz = \int_0^1 g(s) ds$.

Finally we prove monotonicity of $\bar c(|p|)$. Assume by contradiction that  there exist $p_1, p_2\in \R^n$, $|p_1|>|p_2|$,  such that  $\bar c(|p_1|)<\bar c(|p_2|)$. Let $w_1, w_2$ two solutions to \eqref{cellvero} associated to $p_1,p_2$. 
Let $\bar z$ a minimum point of $w_1-w_2$, reasoning as in Step 3, we can assume $w_1(\bar z)=w_2(\bar z)$. Then, at this minimum point, 
\begin{eqnarray*} 0 &=& -|p_1|^2 w_1''(\bar z)+\bar c(|p_1|)( w_1'(\bar z)+1)-g(w_1(\bar z)+\bar z)\\ &<& -|p_2|^2 w_2''(\bar z)+\bar c(|p_2|) (w_2'(\bar z)+1)-g(w_2(\bar z)+\bar z)=0\end{eqnarray*} which gives a contradiction. Therefore $c(|p_1|)\geq c(|p_2|)$. 
\end{proof}

%\begin{remark}\upshape
%Metodi variazionali?
%We define the energy 
%\[E(w)= \int_0^1 e^{-cz} \left(|p|^2 \frac{|w'+1|^2}{2} -G(w+z)\right) dz.\] 
% where $G(t)=\int_0^t g(s)ds$ and minimize it on $H^1_0(0,1)$. 
% 
%Let $w_c\in H^1_0(0,1)$ such that $E(w_c)=\min_{w\in H^1_0(0,1))} E(w) $. 
% We should choose $\bar c(p)$ as the unique $c$ for which $w_c'(0)=w_c'(1)$. 
%\end{remark} 

\begin{remark} \upshape We expect that the same result  holds also for $\int_0^1  g(s)ds>0$. In this case though the ODE arguments are much more involved. 
Observe that, if $g$ changes sign, then necessarily we have $\bar c(0)=0$ and  $\chi_0(z)\equiv s_0$ for $z\in (0,1)$, where $s_0\in [0,1]$ is such that $g(s_0)=0$. 

In the limiting case that $\int_0^1 g(s)ds=0$, the same cell problem has been solved in \cite{j}, see also  \cite[Prop. 1.3]{m}, showing that  
 there exists a solution to \eqref{cell} with  $\bar c(|p|)\equiv 0$ for every $p$.\end{remark}

%As before we consider the family of ODE \[\begin{cases}  h''_a(z) + g(h_a(z))=0 & z>0\\ h_a(0)=0 & \\ h_a'(0)=a.\end{cases}\]
%
%We define the energy as before and we get
%\[\frac{(h_a')^2}{2}+ G(h_a)= \frac{a^2}{2 },\]
%where $G$ is defined in \eqref{primitive}. 
%If $a>2\max G$, then $h'_a(z)>0$ for every $z$. 
%
%We integrate directly the previous equality and obtain for every $z>0$ 
%\[\int_0^{h_a(z)}\frac{1}{\sqrt{a^2-2G(s)}}ds=z.\] By the implicit function theorem (or inverse function theorem) this equality defines  the function $h_a(z)$ (of class $C^2$). 
%
%Then we observe that for every $|p|\neq 0$ there exists a unique $a=a(|p|)>2\max G $,  such that 
%\[\int_0^{1} \frac{1}{\sqrt{a^2(|p|)-2G(s)}}ds=\frac{1}{|p|}.\] The function $a:(0, +\infty)\to (2\max G, +\infty)$ is analytic. 
%
%So, $h_{a(|p|)}$ is a solution to the ODE, moreover $h_{a(|p|)}(1/|p|)= 1$  and $h'_{a(|p|)}(1/|p|)=h'_{a(|p|)}(0)$. 
%
\subsection{Case $\alpha\neq 1$,  the weak and strong diffusion regimes} 
In this section we analyze the solution of the cell problem \eqref{cell} in the case $\alpha\neq 1$. 

The solution to the cell problem is an easy corollary to Theorem \ref{corrector}. 
Moreover, we can also compute the asymptotic behavior as $\eps\to 0$ to the 
solutions to the cell problem. 
\begin{proposition} \label{cellalpha} 
Let $\alpha\neq 1$ and $\eps>0$.  Then  there 
exists a unique constant $\bar c_\alpha^\eps(|p|)$ such that \eqref{cella} admits a solution $\chi_p^\eps$, 
which is monotone increasing, and unique up to horizontal translations. 

Moreover, 
\begin{itemize}\item[i)] if $\alpha>1$ 
then we have that \[\lim_{\eps\to 0^+} \bar c_\alpha^\eps(|p|)=\bar c_{+}(|p|):= \left(\int_0^1 g^{-1}(s)ds\right)^{-1} \qquad \forall p\in \R^n\]  and $\chi_p^\eps\to \chi_0$  uniformly in $C(\R)$, for every $p$, where $\chi_0$ is the solution to \eqref{cell0};

\item[ii)] if $\alpha<1$ 
then we have that \[\lim_{\eps\to 0^+}\bar c_\alpha^\eps(|p|)=\bar c_{-}(|p|):= \begin{cases} \left(\int_0^1 g^{-1}(s)ds\right)^{-1}  & p=0 \\ 
\int_0^1 g(s)ds & p\neq 0 \end{cases}\]  and, for $p\neq 0$,  $\chi_p^\eps(z)\to z$ uniformly in $C^1(\R)$, whereas $\chi_0^\eps=\chi_0$. 
\end{itemize} 
\end{proposition}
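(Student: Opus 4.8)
The plan is to exploit the fact that, apart from the constant $\eps^{\alpha-1}$ multiplying $\chi''$, the cell problem \eqref{cella} is identical to the cell problem \eqref{cell} already solved in Theorem \ref{corrector}. Concretely, I would set $\rho := \eps^{(\alpha-1)/2}|p|$, so that $\rho^2 = \eps^{\alpha-1}|p|^2$, and observe that \eqref{cella} with datum $|p|$ is literally \eqref{cell} with $|p|$ replaced by $\rho$ (the boundary and periodicity conditions being the same). Since \eqref{cell} involves $p$ only through $|p|^2$, the constant and the corrector depend only on the magnitude, so the existence of a unique $\bar c_\alpha^\eps(|p|)$ admitting a monotone increasing solution $\chi_p^\eps$, unique up to horizontal translation, follows at once from Theorem \ref{corrector} through the identities
\[
\bar c_\alpha^\eps(|p|) = \bar c(\rho), \qquad \chi_p^\eps = \chi_q \ \text{ for any } q \text{ with } |q| = \rho .
\]
This reduces the entire statement to tracking the scalar $\rho = \eps^{(\alpha-1)/2}|p|$ as $\eps\to 0^+$ and invoking the limit formulas \eqref{limitcorrector0} and \eqref{limitcorrectorinfty}.

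For part i), $\alpha>1$: since $(\alpha-1)/2>0$, one has $\rho = \eps^{(\alpha-1)/2}|p|\to 0$ as $\eps\to 0^+$ for every fixed $p$. Applying \eqref{limitcorrector0} then gives $\bar c_\alpha^\eps(|p|)=\bar c(\rho)\to \bar c(0)=\left(\int_0^1 g^{-1}(s)\,ds\right)^{-1}$ and $\chi_p^\eps\to\chi_0$ uniformly in $C(\R)$, which is exactly the claim.

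For part ii), $\alpha<1$: now $(\alpha-1)/2<0$. If $p\neq 0$ then $\rho\to+\infty$, and \eqref{limitcorrectorinfty} yields $\bar c_\alpha^\eps(|p|)=\bar c(\rho)\to\int_0^1 g(s)\,ds$ together with $\chi_p^\eps\to z$ uniformly in $C^1(\R)$. If $p=0$ the rescaling is inert, $\rho\equiv 0$ for every $\eps$, so \eqref{cella} coincides with \eqref{cell0}; hence $\chi_0^\eps=\chi_0$ and $\bar c_\alpha^\eps(0)=\bar c(0)=\left(\int_0^1 g^{-1}(s)\,ds\right)^{-1}$, independently of $\eps$.

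I do not expect a genuine analytic obstacle, since everything rests on Theorem \ref{corrector}; the content is the scaling observation. The only points requiring care are bookkeeping: distinguishing the sign of $\alpha-1$ to decide whether $\rho\to 0$ or $\rho\to+\infty$, treating $p=0$ separately (where the rescaling has no effect and the problem is already $\eps$-independent), and verifying that the mode of convergence inherited from \eqref{limitcorrector0}, namely $C(\R)$, and from \eqref{limitcorrectorinfty}, namely $C^1(\R)$, matches the mode asserted in each regime.
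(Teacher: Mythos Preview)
Your proposal is correct and follows essentially the same approach as the paper: identify \eqref{cella} with \eqref{cell} via the rescaling $p_\eps = \eps^{(\alpha-1)/2}p$, invoke Theorem \ref{corrector} for existence and uniqueness, and then read off the limits from \eqref{limitcorrector0} and \eqref{limitcorrectorinfty} according to whether $|p_\eps|\to 0$ or $|p_\eps|\to+\infty$. Your treatment is in fact slightly more explicit than the paper's, since you spell out the $p=0$ subcase in part ii) separately.
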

\begin{proof} 
Note that \eqref{cella}  coincides with the cell problem \eqref{cell}
associated to $p_\eps= p\eps^{\frac{\alpha -1}{2}}$. Therefore by  uniqueness of $\bar c$ proved in Theorem \ref{corrector}, for every $\eps>0$ and every $\alpha\neq 1$, there 
exists a unique $\bar c_\alpha^\eps(|p|)=\bar c(|p|\eps^{\frac{\alpha -1}{2}})$, such that there exists a solution $\chi_p^\eps$ to \eqref{cella}. Note that $\chi^\eps_p= \chi_{p\eps^{\frac{\alpha -1}{2}}}$. 

Moreover,  if $\alpha>1$, since $|p|\eps^{\frac{\alpha-1}{2}}\to 0$ for every $p$, then by \eqref{limitcorrector0}, $\bar c_\alpha^\eps(|p|)\to \bar c(0)$ and  $\chi_p^\eps\to \chi_0$ uniformly.

If $\alpha<1$, then for $p\neq 0$, $|p|\eps^{\frac{1-\alpha}{2}} \to +\infty$ and then, by \eqref{limitcorrectorinfty},  $\bar c_\alpha^\eps(|p|)\to \int_0^1 g(s)ds$ and $\chi_p^\eps(z)\to z$  uniformly in $C^1$ for $p\neq 0$ as $\eps\to 0$. 

\end{proof} 

\section{Convergence of solutions} \label{secconv}
In this section we study the asymptotic limit as $\eps\to 0$ of the solutions to \eqref{eq} in the different regimes, $\alpha=1$, $\alpha>1$, $0<\alpha<1$ and $\alpha=0$.
 
According to Proposition \ref{seq},  the solutions $u^\eps_\alpha$ to \eqref{eq}  converge locally  uniformly, up to subsequences,  to a Lipschitz function $u$. 
Our aim is to show that the  limit $u$ is a viscosity solution of an effective equation, given by $u_t- c(|\nabla u|)=0$. The effective operator has been defined in Theorem \ref{corrector} for $\alpha=1$, and it coincides with  the continuous function  $\bar c(|p|)$. In the case $\alpha\neq 1$, the effective operator  has been  defined in Proposition \ref{cellalpha}. It coincides in the case  $\alpha>1$ with the constant value $\bar c_+(|p|)\equiv  \left(\int_0^1 (g(s))^{-1}ds\right)^{-1}$, whereas in the case $0<\alpha<1$, it is  $\bar c_-(|p|)$, which coincides with $\int_0^1 g$ for $p\neq 0$, and with  $ \left(\int_0^1  (g(s))^{-1}ds\right)^{-1}$ for $p=0$. We consider  also the limiting case $\alpha=0$, where the effective equation is given by $u_t-\bar F(\nabla u, \nabla^2 u)=0$. 

We start with a preliminary estimate which follows from the comparison principle for \eqref{eq} .

\begin{proposition}\label{corollario2}%Assume  $u_0(x)\in L^\infty(\R^n)$.
Let $\ue_\alpha$ be  the  solution   to \eqref{eq} with $\alpha\geq 0$.  Then every uniform limit $u$ of $u^\eps_\alpha$ satisfies 
\[ \inf_{\R^n} u_0 + t \left(\int_0^1 \frac{1}{g(s)}ds\right)^{-1}\le 
u(x,t)\le \sup_{\R^n} u_0 + t \left(\int_0^1 \frac{1}{g(s)}ds\right)^{-1}.\] 
\end{proposition}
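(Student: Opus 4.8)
The plan is to construct explicit global sub- and supersolutions to \eqref{eq} that are independent of the space variable $x$, and then to invoke the comparison principle. The key observation is that the function $\chi_0$ defined implicitly in Step 1 of the proof of Theorem \ref{corrector} by
\[
\int_0^{\chi_0(z)}\frac{ds}{g(s)}=z\int_0^1\frac{ds}{g(s)},
\]
is a solution of the cell problem for $p=0$, i.e. it satisfies $\bar c(0)\,\chi_0'(z)=g(\chi_0(z))$ with $\bar c(0)=\left(\int_0^1 g(s)^{-1}ds\right)^{-1}$. Since $\chi_0$ is monotone increasing and satisfies $\chi_0(z+1)=\chi_0(z)+1$, the rescaled functions $\eps\chi_0$ furnish an $x$-independent exact solution of the evolution driven purely by the $g$-term, with $\Delta$ acting trivially.

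First I would define, for a constant $k\in\R$, the candidate spatially homogeneous function
\[
v^\eps(x,t):=\eps\,\chi_0\!\left(\frac{\bar c(0)\,t+\eps k}{\eps}\right).
\]
A direct computation shows that $v^\eps$ depends only on $t$, so $\Delta v^\eps\equiv 0$, and
\[
(v^\eps)_t=\bar c(0)\,\chi_0'\!\left(\tfrac{\bar c(0)t+\eps k}{\eps}\right)
=g\!\left(\chi_0\!\left(\tfrac{\bar c(0)t+\eps k}{\eps}\right)\right)
=g\!\left(\tfrac{v^\eps}{\eps}\right),
\]
using precisely the $p=0$ cell equation $\bar c(0)\chi_0'=g(\chi_0)$ and the periodicity that makes $g(\chi_0(z))=g(\chi_0(z)+n)$. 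Hence $v^\eps$ solves the PDE in \eqref{eq} exactly, for every choice of $k$.

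Next I would fix the initial data by choosing $k$ appropriately. To build a supersolution, pick $k=k_\eps$ so that $v^\eps(x,0)=\eps\chi_0(k_\eps)\ge \sup_{\R^n}u_0$; since $\chi_0$ is increasing and surjective onto $\R$ this is possible, and one can arrange $\eps\chi_0(k_\eps)\le \sup u_0+\eps$ up to the $O(\eps)$ error coming from the integer-step structure of $\chi_0$. Then by the comparison principle $u^\eps_\alpha(x,t)\le v^\eps(x,t)$ for all $(x,t)$. Symmetrically, choosing $k$ so that $v^\eps(x,0)\le\inf_{\R^n}u_0$ produces a subsolution bounding $u^\eps_\alpha$ from below. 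The monotonicity of $\chi_0$ together with the normalization $\chi_0(z+1)=\chi_0(z)+1$ lets me read off that $v^\eps(x,t)=\bar c(0)\,t+\eps\chi_0(k_\eps)+o(1)$ in the relevant regime, so that passing to the local uniform limit $\eps\to 0$ the $O(\eps)$ corrections vanish and I obtain
\[
\inf_{\R^n}u_0+\bar c(0)\,t\le u(x,t)\le \sup_{\R^n}u_0+\bar c(0)\,t,
\]
which is the claim since $\bar c(0)=\left(\int_0^1 g(s)^{-1}ds\right)^{-1}$.

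The step I expect to require the most care is the bookkeeping of the $O(\eps)$ discrepancies between $\eps\chi_0(k_\eps)$ and the exact suprema/infima of $u_0$: because $\chi_0$ increases by exactly $1$ over each unit interval, $\eps\chi_0$ takes values on a lattice of mesh proportional to $\eps$, so the initial-data inequalities cannot be made exact but only hold up to $O(\eps)$. Controlling this uniformly and verifying that it disappears in the limit is the main (though routine) obstacle; everything else reduces to the cell equation for $\chi_0$ and a standard application of the comparison principle established in Proposition \ref{seq}.
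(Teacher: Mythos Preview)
Your proposal is correct and follows essentially the same route as the paper: both build spatially constant exact solutions of \eqref{eq} from the $p=0$ corrector $\chi_0$, sandwich $u^\eps_\alpha$ via comparison, and pass to the limit using $\chi_0(z)-z=O(1)$. The paper parametrizes the barrier as $\eps\chi_0(\bar c(0)t/\eps)+\eps[k/\eps]$ (shifting by integer multiples of $\eps$ and invoking the periodicity of $g$), while you shift the argument of $\chi_0$ by a real $k$; since $\chi_0$ is a continuous increasing bijection of $\R$, your version actually lets you match $\eps\chi_0(k_\eps)=\sup u_0$ (or $\inf u_0$) \emph{exactly} when these are finite, so the ``lattice'' worry you flag at the end is unnecessary---$\eps\chi_0$ is not lattice-valued.
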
 
\begin{proof}  
It is enough to prove the result when $u_0\equiv k$, for some constant $k\in\R$. The thesis then follows by comparison principle for \eqref{eq}.

Recall that  $\bar c(0)=\left(\int_0^1 \frac{1}{g(s)}ds\right)^{-1}$ and observe that, if $\chi_0$ is the solutions to \eqref{cell0},  the functions 
\[v^\eps(x, t)=\eps\chi_0\left(\frac{t\bar c(0)}{\eps}\right)+\eps\left[\frac{k}{\eps}\right]-\eps\quad\text{and}\quad 
V^\eps(x, t)=\eps\chi_0\left(\frac{t\bar c(0)}{\eps}\right)+\eps\left[\frac{k}{\eps}\right]\]  
are respectively a sub and a supersolution to \eqref{eq}, for every $\alpha\geq 0$. So, by comparison 
\[v^\eps(x,t)\leq u^\eps_\alpha(x,t)\leq V^\eps(x,t).\] Letting $\eps\to 0$ and recalling that $\chi_0(z)/z\to 1$ as $z\to +\infty$, we get the conclusion. 
\end{proof}

\subsection{Case $\alpha=1$}
\begin{theorem} \label{conv1} Let $\ue$ be  the  solution   to \eqref{eq} for $\eps>0$ and $\alpha=1$. Then $\ue$ converges as $\eps\to 0$ locally uniformly to the unique Lipschitz continuous viscosity solution to \begin{equation}\label{limit} \begin{cases} 
u_t-\bar c(|\nabla u|)=0, \\ u(x,0)=u_0(x).  \end{cases} \end{equation} 
\end{theorem}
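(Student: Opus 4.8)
The plan is to show that any locally uniform limit $u$ of the solutions $\ue$ (such a limit exists along a subsequence, is Lipschitz, and satisfies \eqref{estlip} by Proposition \ref{seq}) is a viscosity solution of \eqref{limit}, and then to invoke Proposition \ref{limitprop}: since \eqref{limit} has a unique Lipschitz viscosity solution and every subsequential limit coincides with it, the whole family $\ue$ converges locally uniformly to that solution. The initial condition and the Lipschitz bound pass to the limit by uniform convergence, so all the work is in the interior, where I would run the perturbed test function method of Evans, using the correctors $\chi_p$ furnished by Theorem \ref{corrector} as local barriers.

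To prove that $u$ is a subsolution, let $\phi\in C^2$ touch $u$ from above at $(x_0,t_0)$, with a maximum of $u-\phi$ that we may assume strict by a standard modification of $\phi$, and set $p:=\nabla\phi(x_0,t_0)$. Arguing by contradiction, suppose $\phi_t(x_0,t_0)>\bar c(|p|)$ and use the perturbed test function $\phi^\eps(x,t):=\eps\chi_p(\phi(x,t)/\eps)$. A direct computation using the cell equation \eqref{cell} gives
\[
\phi^\eps_t-\eps\Delta\phi^\eps-g(\phi^\eps/\eps)=\chi_p'(z)\bigl[\phi_t-\bar c(|p|)-\eps\Delta\phi\bigr]+\chi_p''(z)\bigl[|p|^2-|\nabla\phi|^2\bigr],\qquad z=\phi/\eps.
\]
Because $\chi_p'$ is bounded below by a positive constant (from \eqref{parmigiana}) and $\chi_p''$ is bounded (from \eqref{cell} itself), and because $\nabla\phi\to p$ and $\eps\Delta\phi\to0$, one can fix a small parabolic cylinder $Q$ around $(x_0,t_0)$ and $\eps_0>0$ so that the right-hand side is $\ge\sigma>0$ throughout $Q$ for all $\eps<\eps_0$; that is, $\phi^\eps$ is a strict supersolution of \eqref{eq} in $Q$. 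The degenerate case $p=0$ is handled the same way: there $\chi_0$ solves the first-order equation \eqref{cell0}, the error term reduces to $-\chi_0''|\nabla\phi|^2=O(r^2)$ on a cylinder of radius $r$, and this is dominated by the fixed positive main term once $r$ is small.

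The delicate point, and the one I expect to be the main obstacle, is the comparison step, because $g$ is evaluated at $\ue/\eps$: at a contact point $(x_\eps,t_\eps)$ of $\ue-\phi^\eps$ the equation yields $g(\ue/\eps)-g(\phi^\eps/\eps)\ge\sigma$, and the naive bound $|g(\ue/\eps)-g(\phi^\eps/\eps)|\le\mathrm{Lip}(g)\,|\ue-\phi^\eps|/\eps$ blows up unless the two functions agree \emph{exactly} in value at the contact point. To force an exact value match I would exploit the two translation invariances already used in Proposition \ref{seq}: first, by periodicity of $g$, $\phi^\eps+\eps n$ remains a strict supersolution for every $n\in\Z$; second, for every $s\in\R$ the horizontally shifted corrector gives $\phi^{\eps,s}:=\eps\chi_p(\phi/\eps+s)$, again a strict supersolution with the same margin, with $\phi^{\eps,s+1}=\phi^{\eps,s}+\eps$ and $s\mapsto\phi^{\eps,s}$ continuous and increasing. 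Writing $M(s):=\max_{\overline Q}(\ue-\phi^{\eps,s})$ and, after adding a suitable integer multiple of $\eps$ to $\phi^\eps$, assuming $M(0)\in[0,\eps)$, we get $M(1)=M(0)-\eps<0$, so by the intermediate value theorem there is $s_\eps$ with $M(s_\eps)=0$. Since on the parabolic boundary $\partial_p Q$ the difference stays strictly negative (strict maximum plus uniform convergence), this zero maximum is attained at an interior point where $\ue$ and $\phi^{\eps,s_\eps}$ coincide in value; then $g(\ue/\eps)=g(\phi^{\eps,s_\eps}/\eps)$, and the contact-point inequality becomes $0\ge\sigma$, a contradiction.

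The supersolution property of $u$ is entirely symmetric: one lets $\phi$ touch $u$ from below, checks that $\phi^\eps$ is a strict subsolution in a small cylinder when $\phi_t(x_0,t_0)<\bar c(|p|)$, and runs the same alignment-by-translation argument at a minimum contact point. Having shown that $u$ is both a sub- and a supersolution of \eqref{limit}, uniqueness from Proposition \ref{limitprop} identifies the limit and upgrades subsequential convergence to full convergence, completing the proof.
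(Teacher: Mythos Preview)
Your proposal is correct and follows essentially the same route as the paper. Both use the perturbed test function $\phi^{\eps,s}=\eps\chi_p(\phi/\eps+s)$ with $p=\nabla\phi(x_0,t_0)$, and both resolve the $g(\ue/\eps)$ versus $g(\phi^\eps/\eps)$ mismatch by the same intermediate-value argument in $s$ (using $\phi^{\eps,s+1}=\phi^{\eps,s}+\eps$) to force an exact value match at the contact point; the only difference is cosmetic---you argue by contradiction, whereas the paper plugs directly into the equation at the contact point and lets $\eps\to0$.
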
 
\begin{proof} 
By Proposition \ref{seq}, up to passing to subsequences $\ue\to u$ locally uniformly, where $u$ is a Lipschitz continuous function which satisfies \eqref{estlip}. 
So, if we prove that $u$ is a solution to \eqref{limit}, we conclude using uniqueness of solutions to \eqref{limit} as stated in Proposition \ref{limitprop} the convergence of the whole sequence $u^\eps$ to $u$. 

We show that $u$ is a subsolution to the effective equation in \eqref{limit}, the proof of the supersolution property being completely analogous.

Let $(x_0,t_0)$ and $\phi$ a smooth function such that $u-\phi$ has a strict maximum at $(x_0,t_0)$ and  $u(x_0,t_0)=\phi(x_0,t_0)$.  Let $R>0$ and let $\bar B$ the closed ball centered at $(x_0,t_0)$ and with radius $R$.   Define a family of perturbed test functions, parametrized by a parameter $s\in\R$,  as follows:
\[\phi^\eps_s(x,t)=\eps \chi_p\left(\frac{\phi(x,t)}{\eps}+s\right) \] where $\chi_p$ is a solution to \eqref{cell} with $p=\nabla\phi(x_0,t_0)$. By the properties of $\chi_p$, $\phi^\eps_{s+1}(x,t)=\phi^\eps_s(x,t)+\eps$.
Note that $\phi^\eps_s \to \phi$ as $\eps\to 0$, locally uniformly in $x,t,s$. So for every $s$ there exists a sequence $(x^\eps_s, t^\eps_s)\to (x_0,t_0)$ as $\eps\to 0$ 
such that $(x^\eps_s, t_s^\eps)$ is a maximum point for $\ue-\phi_s^\eps$ in $\bar B$ and $(\ue-\phi_s^\eps)(x_s^\eps, t_s^\eps)\to  u(x_0,t_0)-\phi(x_0,t_0)=0$.
We claim that for every $\eps>0$ we can choose $s^\eps$ such that $(\ue-\phi_{s^\eps}^\eps)(x_{s^\eps}^\eps, t_{s^\eps}^\eps)=0$. 
Indeed, let $m(s)=\max_{\bar B} (\ue-\phi_{s}^\eps)$. 
Note that  $m(s)$ is continuous and $m(s+k)=m(s)-\eps k$ for every $k\in \Z$. Therefore by continuity there exists $s^\eps$ such that $m(s^\eps)=0$. 

From now on we fix the test function $\phi^\eps=\phi^\eps_{s^\eps}$ and the maximum point $(x_{s^\eps}^\eps, t_{s^\eps}^\eps)=(x^\eps, t^\eps)$. So, $\ue(x^\eps, t^\eps)=\phi^\eps(x^\eps, t^\eps)$, $\ue\leq \phi^\eps$ in $\bar B$ and $(x^\eps, t^\eps)\to (x_0,t_0)$ as $\eps\to 0$. Indeed, let $\tilde s_\eps\in [0,1)$ be the fractional part of $s^\eps$, then by the properties of $\chi_p$ we get that $(x^\eps_{\tilde s^\eps}, t^\eps_{\tilde s^\eps})= (x^\eps_{s^\eps}, t^\eps_{s^\eps})$.  So the conclusion follows by the locally uniform convergence of $\phi^\eps_{s^\eps}$ to $\phi$. 

Let us denote $z^\eps=\frac{\phi(x^\eps,t^\eps)}{\eps}+s^\eps $, so that $u^\eps(x^\eps, t^\eps)=\eps \chi_p(z^\eps)$. 
We compute  \[\ue_t(x^\eps, t^\eps)= \phi_t^\eps(x^\eps, t^\eps)=\chi'_p(z_\eps) \phi_t(x^\eps, t^\eps)\] and  
\[-\eps\Delta \ue(x^\eps, t^\eps)\geq -\eps\Delta \phi^\eps(x^\eps, t^\eps)= - \eps\chi'_p(z_\eps) \Delta \phi(x^\eps, t^\eps)-\chi''_p(z^\eps)  |\nabla\phi(x^\eps, t^\eps)|^2.\] 
Plugging these quantities   into  Equation \eqref{eq} computed at $(x^\eps, t^\eps)$, we obtain 
\[ 0=\ue_t-\eps \Delta \ue -g\left(\frac{\ue}{\eps}\right) \geq \chi'_p(z^\eps) \phi_t(x^\eps, t^\eps)- \eps\chi'_p(z^\eps) \Delta \phi(x^\eps, t^\eps)-\chi''_p(z^\eps)  |\nabla\phi(x^\eps, t^\eps)|^2- 
g(\chi_p(z^\eps)).\]
Using the fact that $\chi_p$ solves \eqref{cell}, we get 
\begin{eqnarray}
 \nonumber
 0 &\geq & \chi'_p(z_\eps) \left(\phi_t(x_0, t_0)- \bar c(|\nabla\phi(x_0,t_0)|) \right) \\
\label{due} &-& \chi'_p(z_\eps) \left(\phi_t(x_0, t_0)-\phi_t(x^\eps, t^\eps)   +\eps  \Delta \phi(x^\eps, t^\eps)\right) \\ \label{tre} 
&-& \chi''_p(z^\eps) \left( |\nabla\phi(x^\eps, t^\eps)|^2-|\nabla\phi(x_0,t_0)|^2\right).\end{eqnarray} 

Computing \eqref{cell} at minima and maxima of $\chi'_p$ we deduce  that \begin{equation}\label{stima1} \chi_p'(z)\in \left[\frac{\min g}{\int_0^1 g(s)ds}, \frac{\max g}{\min g}\right]\qquad\forall p, \ \ \forall z.\end{equation} Moreover, from equation \eqref{cell}, we deduce that also \begin{equation} \label{stima2}\|\chi''_p\|_\infty\leq \frac{\max g-\min g}{|p|^2 }\quad \text{ if }p\neq 0\qquad\text{and}\quad  \|\chi''_0\|_\infty \leq \frac{ \|g\|_\infty \|g'\|_\infty}{\bar c(0)^2}. \end{equation} Therefore, as $\eps\to 0$, we get that the terms in \eqref{due}, \eqref{tre} go to zero by the smoothness of $\phi$, and we are left with $ \phi_t(x_0, t_0)- \bar c(|\nabla\phi(x_0,t_0)|)\leq 0$. 
\end{proof}  

\subsection{Case $\alpha>1$} 
\begin{theorem} \label{conv2} Let $\ue_\alpha$ be  the  solution   to \eqref{eq} with $\alpha>1$. 
Then \[\lim_{\eps\to 0} \ue_\alpha(x,t)= u_0(x)+t \left(\int_0^1 \frac{1}{g(s)}ds\right)^{-1} \qquad \text{  locally uniformly.}\]
\end{theorem}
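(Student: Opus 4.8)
The plan is to show that the limit $u$ of $u^\eps_\alpha$ solves the trivial effective problem $u_t = \bar c(0) = \left(\int_0^1 g(s)^{-1}ds\right)^{-1}$ with initial datum $u_0$, whose unique solution is $u_0(x)+t\,\bar c(0)$. By Proposition \ref{seq} we already know that, up to subsequences, $u^\eps_\alpha\to u$ locally uniformly to a Lipschitz function. The heart of the matter is to identify the effective equation by the perturbed test function method, exactly as in the proof of Theorem \ref{conv1}, but now using the $\alpha>1$ cell problem \eqref{cella} whose solutions and effective constant are controlled by Proposition \ref{cellalpha}(i).

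First I would establish that $u$ is a viscosity solution of $u_t - \bar c(0)=0$. For the subsolution property, fix $(x_0,t_0)$ and a smooth $\phi$ touching $u$ from above there. The key observation is that for every fixed $\eps$ the cell problem \eqref{cella} associated to $p=\nabla\phi(x_0,t_0)$ has a monotone solution $\chi^\eps_p$ with constant $\bar c^\eps_\alpha(|p|)$, and by Proposition \ref{cellalpha}(i) one has $\bar c^\eps_\alpha(|p|)\to\bar c(0)$ and $\chi^\eps_p\to\chi_0$ uniformly as $\eps\to0$. I would build the perturbed test function $\phi^\eps_s(x,t)=\eps\,\chi^\eps_p\!\left(\frac{\phi(x,t)}{\eps}+s\right)$ and select $s^\eps$ by the same continuity/periodicity argument as in Theorem \ref{conv1} so that the touching is exact at a point $(x^\eps,t^\eps)\to(x_0,t_0)$. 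Plugging $\phi^\eps$ into \eqref{eq} and using that $\chi^\eps_p$ solves \eqref{cella}, the spatial terms assemble into $\eps^{\alpha-1}\chi''|p|^2-\bar c^\eps_\alpha(|p|)\chi'+g(\chi)=0$, leaving
\[
0\ \ge\ (\chi^\eps_p)'(z^\eps)\bigl(\phi_t(x_0,t_0)-\bar c^\eps_\alpha(|\nabla\phi(x_0,t_0)|)\bigr)+o(1),
\]
where the $o(1)$ collects the differences coming from evaluating $\phi_t,\nabla\phi,\Delta\phi$ at $(x^\eps,t^\eps)$ versus $(x_0,t_0)$ and the viscosity term $\eps^\alpha\Delta\phi^\eps$.

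The main obstacle, and the place where the $\alpha>1$ scaling does the work, is controlling these error terms, since the second-derivative term in \eqref{cella} now carries the prefactor $\eps^{\alpha-1}$ rather than $1$. Concretely, the contribution of $-\eps^\alpha\Delta\phi^\eps$ produces a term of the form $\eps^{\alpha-1}(\chi^\eps_p)''|\nabla\phi|^2$, and I must check this stays bounded (indeed vanishes) as $\eps\to0$. Following the bounds in \eqref{stima1}--\eqref{stima2} applied to the rescaled slope $p_\eps=p\,\eps^{(\alpha-1)/2}$, the estimate $\|(\chi^\eps_p)''\|_\infty\le(\max g-\min g)/(\eps^{\alpha-1}|p|^2)$ for $p\neq0$ gives $\eps^{\alpha-1}(\chi^\eps_p)''|p|^2$ uniformly bounded, so the factor $\eps^\alpha\Delta\phi$ times this forces the whole viscous correction to $o(1)$; for $p=0$ the uniform convergence $\chi^\eps_p\to\chi_0$ and the $p=0$ bound in \eqref{stima2} handle it directly. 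Since $(\chi^\eps_p)'$ is bounded below by $\min g/\int_0^1 g>0$ uniformly, I may divide through and pass to the limit using $\bar c^\eps_\alpha(|\nabla\phi|)\to\bar c(0)$, obtaining $\phi_t(x_0,t_0)-\bar c(0)\le0$. The supersolution inequality is symmetric.

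Having shown $u$ solves $u_t=\bar c(0)$ with $u(x,0)=u_0$, I invoke Proposition \ref{limitprop} with the constant function $\bar c\equiv\bar c(0)$ to conclude that $u$ is the unique Lipschitz viscosity solution, namely $u(x,t)=u_0(x)+t\left(\int_0^1 g(s)^{-1}ds\right)^{-1}$; uniqueness of the limit then upgrades the subsequential convergence of Proposition \ref{seq} to convergence of the whole family, completing the proof.
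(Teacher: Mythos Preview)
Your approach is correct and differs from the paper's only in the choice of corrector. The paper does not use the $\eps$-dependent corrector $\chi^\eps_p$ associated with $p=\nabla\phi(x_0,t_0)$; it simply takes the $p=0$ corrector $\chi_0$ from \eqref{cell0}, regardless of the slope of the test function. Since $\chi_0$ solves $\bar c(0)\chi_0'-g(\chi_0)=0$, plugging $\phi^\eps=\eps\chi_0(\phi/\eps+s)$ into \eqref{eq} leaves the error terms $\eps^\alpha\chi_0'\Delta\phi$ and $\eps^{\alpha-1}\chi_0''|\nabla\phi|^2$; both vanish directly because $\alpha>1$ and $\chi_0',\chi_0''$ are bounded by fixed constants independent of $\eps$. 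This bypasses Proposition~\ref{cellalpha} entirely and avoids the delicate step you single out---controlling $\eps^{\alpha-1}(\chi^\eps_p)''$ via the blow-up estimate in \eqref{stima2} at the rescaled slope $p_\eps$. What your route buys is uniformity with the $\alpha=1$ and $0<\alpha<1$ arguments (same template, different asymptotics of $\bar c^\eps_\alpha$); what the paper's route buys is a shorter proof with no $\eps$-dependent corrector and no need to track the convergence $\bar c^\eps_\alpha(|p|)\to\bar c(0)$.
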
 
\begin{proof} 
The argument is similar (in fact easier) of that in the proof of Theorem \ref{conv1}. We sketch it briefly. Up to subsequences, we know that
$u^\eps_\alpha$ is  converging locally uniformly to some function $u$ (eventually depending on the subsequence). 

We show that $u_t\leq \left(\int_0^1 \frac{1}{g(s)}ds\right)^{-1}$  in the viscosity sense. A completely analogous argument shows that $u_t\geq \left(\int_0^1 \frac{1}{g(s)}ds\right)^{-1}$ in the viscosity sense. Recalling that $u(x,0)=u_0(x)$, we conclude that therefore   $u(x,t)=u_0(x)+t \left(\int_0^1 \frac{1}{g(s)}ds\right)^{-1}$.

Let $(x_0,t_0)$ and $\phi$ a smooth function such that $u-\phi$ has a strict maximum at $(x_0,t_0)$ and  $u(x_0,t_0)=\phi(x_0,t_0)$.  Let $R>0$ and let $\bar B$ the closed ball centered at $(x_0,t_0)$ and with radius $R$.   We define  a perturbed test function as follows:
\[\phi^\eps(x,t)=\eps \chi_0\left(\frac{\phi(x,t)}{\eps}+s\right) \] where $\chi_0$ is the  solution to \eqref{cell0} and the parameter $s$ is chosen as in  the proof of Theorem \ref{conv1}. So, $(x^\eps, t^\eps)$ is a maximum point for $\ue-\phi^\eps$ in $\bar B$ and $\ue(x^\eps, t^\eps)=\phi^\eps(x^\eps, t^\eps)$, $\ue\leq \phi^\eps$ in $\bar B$ and $(x^\eps, t^\eps)\to (x_0,t_0)$ as $\eps\to 0$. 

Let us denote $z^\eps=\frac{\phi(x^\eps,t^\eps)}{\eps}+s  $, so that $u^\eps(x^\eps, t^\eps)=\eps \chi_0(z^\eps)$. 

So using the fact that $(x^\eps, t^\eps)$ is a maximum point for $\ue-\phi^\eps$, we plug $\phi^\eps$ into \eqref{eq} and we obtain 
\[ 0=\ue_t-\eps^\alpha \Delta \ue -g\left(\frac{\ue}{\eps}\right) \geq \chi'_0  \phi_t(x^\eps, t^\eps)- \eps^\alpha\chi'_0\Delta \phi(x^\eps, t^\eps)-\eps^{\alpha-1} \chi''_0  |\nabla\phi(x^\eps, t^\eps)|^2- g(\chi_0).\]
By regularity of $\phi$ and using the estimates \eqref{stima1}, \eqref{stima2}, we get that, as $\eps\to 0$, $\eps^\alpha\chi'_0\Delta \phi(x^\eps, t^\eps) \to 0$ and $\eps^{\alpha-1} 
\chi''_0  |\nabla\phi(x^\eps, t^\eps)|^2\to 0$. So, we conclude recalling that $\chi_0'>0$ and that $\chi_0$ solves \eqref{cell0}
that \[ 0  \geq  \phi_t(x_0, t_0)- \bar c(0)+O(\eps). \]
\end{proof} 

\subsection{Case $0<\alpha<1$}
In this case, the limit differential operator $\bar c_-(|p|)$ is not continuous, but just lower semicontinuous. 
In particular the lower semicontinuous envelope of $\bar c_-$ coincides with the function itself, whereas the upper semicontinuous envelope is the constant 
function $\bar c_-(|p|)^\ast\equiv \int_0^1 g(s)ds$. 

We now show that every limit of $u^\eps_\alpha$ is a viscosity solution of the limit problem 
\eqref{limit01}. According to the definition recalled in Section \ref{notation}, this means the following.
If $\phi$ is  a smooth test function such that  $u(x_0,t_0)=\phi(x_0, t_0)$ and $u\leq \phi$, then  $\phi_t(x_0, t_0)\leq \int_0^1 g(s) ds$. 
If, on the other hand, $u\geq \phi$, then  $\phi_t(x_0, t_0)\geq \bar c_-(|\nabla \phi(x_0, t_0)|)$, 
so in particular $\phi_t(x_0, t_0)\geq  \int_0^1 g(s) ds$ at points where $\nabla \phi(x_0,t_0)\neq 0$ and  
$\phi_t(x_0, t_0)\geq \bar c_-(0)= ( \int_0^1 g^{-1}(s) ds)^{-1}$  at points where $\nabla \phi(x_0,t_0)= 0$.   

We recall that due to the discontinuity of the operator, differently to the case $\alpha\ge 1$, viscosity solutions to \eqref{limit01} are in general  not unique.

\begin{theorem} \label{conv3} Let $\ue_\alpha$ be  the  solution   to \eqref{eq} with $0<\alpha<1$.  
Every locally uniformly limit $u$ of $u^\eps_\alpha$ is a Lipschitz continuous function, which satisfies \eqref{estlip}, and solves in  the viscosity sense the problem
 \[\begin{cases} u_t-\bar c_-(\nabla u)=0  & \text{ in }\R^n\times (0+\infty)\\ 
u(x,0)=u_0(x) & \text{ in  }\R^n.\end{cases}\]
Moreover, 
\begin{itemize} 
\item[i)]  $u$ satisfies in the viscosity sense   \[u_t= \int_0^1g(s)ds\]   in every open set $\Omega\subset \R^n\times [0,+\infty)$, such that $\nabla u \neq 0$ a.e. in  $\Omega$.  
%In particular $u$ solves  $\Omega\subseteq \overline{\mathcal{A}}:=\overline{\{(x,t)\ | \nabla u(x,t)\neq 0\}}.$
\item[ii)]  $u$ is a viscosity subsolution to \[u_t= \left(\int_0^1 \frac{1}{g(s)}ds\right)^{-1}\] at every point $(x_0,t_0)$ such that  $(0, X, \lambda)\in D^+ u(x_0, t_0)$ with $X< 0$ in the sense of matrices.
\item [iii)]  $u$ is a viscosity supersolution to \[u_t= \int_0^1 g(s)ds\] at every point $(x_0,t_0)$ such that  $(0, X, \lambda)\in D^- u(x_0, t_0)$,  and there exist $\eta\in \R^n\setminus \{0\}$, $\delta>0$,  such that $\eta^t X\eta\ge \delta|\eta|^2$.
\end{itemize} 
\end{theorem}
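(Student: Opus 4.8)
The plan is to run the perturbed test function method of Evans, using the correctors $\chi^\eps_p$ constructed in Proposition \ref{cellalpha} as local barriers, exactly as in the proofs of Theorems \ref{conv1} and \ref{conv2}, and to reserve a separate comparison argument for the degenerate gradient direction. First I record that $u$ is Lipschitz and satisfies \eqref{estlip} with $u(x,0)=u_0$ directly from Proposition \ref{seq} (the initial condition because $u^\eps(x,0)=u_0$ and $u^\eps\to u$ locally uniformly), so only the interior relations must be checked.

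For the subsolution inequality I want $\phi_t(x_0,t_0)\le\int_0^1 g$ whenever a smooth $\phi$ touches $u$ from above at $(x_0,t_0)$. If $p:=\nabla\phi(x_0,t_0)\neq0$ I insert $\phi^\eps=\eps\chi^\eps_p(\phi/\eps+s^\eps)$ into \eqref{eq} at the contact point $(x^\eps,t^\eps)$, choosing $s^\eps$ by the continuity/periodicity argument of Theorem \ref{conv1}. The dangerous term $\eps^{\alpha-1}(\chi^\eps_p)''|\nabla\phi(x^\eps)|^2$ is rewritten through the cell equation \eqref{cella} as $[\bar c^\eps_\alpha(|p|)(\chi^\eps_p)'-g(\chi^\eps_p)]\,|\nabla\phi(x^\eps)|^2/|p|^2$; since $\nabla\phi(x^\eps)\to p\neq0$ the mismatch factor tends to $1$, while by Proposition \ref{cellalpha}(ii) one has $\bar c^\eps_\alpha(|p|)\to\int_0^1 g$ and $(\chi^\eps_p)'\to1$ (with the uniform bounds \eqref{stima1} controlling the remaining errors), so $\phi_t(x_0,t_0)\le\int_0^1 g$. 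The case $p=0$ follows by perturbing the test function to $\phi+q\cdot(x-x_0)$, applying the previous step at the nearby maximum of $u-\phi-q\cdot(x-x_0)$ whose slope $\nabla\phi+q$ is nonzero for generic small $q$, and letting $q\to0$. Reversing the inequality, the very same computation at a contact point from below with nonzero gradient yields the supersolution relation $\phi_t\ge\int_0^1 g$; combined with the subsolution bound this gives the equality $u_t=\int_0^1 g$ of statement (i) on any open set where the gradient does not degenerate.

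The crux is the degenerate direction $\nabla\phi(x_0,t_0)=0$, and this is where I expect the real difficulty. Here the offending term is $\eps^{\alpha-1}(\chi^\eps_0)''|\nabla\phi(x^\eps)|^2$: because the $p=0$ cell problem \eqref{cell0} carries no factor $|p|^2$, this term is \emph{not} absorbed, and since $\alpha<1$ the prefactor $\eps^{\alpha-1}$ blows up while $|\nabla\phi(x^\eps)|^2\to0$ at an uncontrolled rate, so the perturbed test function method genuinely breaks down and must be replaced by comparison with explicit barriers. For the basic supersolution bound $\phi_t\ge\bar c(0)$ — which reflects that $\bar c(0)$ is the minimal propagation speed, cf.\ Proposition \ref{corollario2} — I compare $u^\eps$ on a small parabolic cylinder with a nearly space-homogeneous subsolution $\eps\chi_0((\bar c(0)-\theta)(t-t_0)/\eps+\cdots)$, the spatial dependence being scaled like $\eps^{(1-\alpha)/2}$ so as to keep the singular term bounded; a growth rate strictly below $\bar c(0)$ would let this subsolution overtake $u^\eps$ at the centre, contradicting the comparison principle for \eqref{eq}.

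Properties (ii) and (iii) are proved by the same barrier philosophy, and their feasibility turns precisely on the sign of the Hessian $X$. For (iii), at a point of $D^-u$ with $\eta^t X\eta\ge\delta|\eta|^2$, I build a fast subsolution from a corrector $\chi^\eps_q$ with $q$ parallel to $\eta$ and $|q|=\beta>0$, using a wedge-type phase $\beta|(x-x_0)\cdot\eta|$ on a thin cylinder aligned with $\eta$: the singular term is absorbed by \eqref{cella} with slope $\beta$ away from the convex ridge (harmless for subsolutions), and the strict positive curvature makes $\phi$ large on the lateral boundary, so this subsolution growing at rate $\int_0^1 g-\theta$ sits below $u^\eps$ on the parabolic boundary yet overtakes at the centre unless $\phi_t\ge\int_0^1 g$. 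For (ii), at a point of $D^+u$ with $X<0$, I instead tuck a slow supersolution $\eps\chi_0(\cdots)$ growing at rate $\bar c(0)+\theta$ over $u^\eps$; the strict concavity supplies a lateral margin of order $r^2$ that keeps the supersolution above $u\le\phi$ on the boundary, while at the centre it is placed strictly below, so comparison forces $\phi_t\le\bar c(0)$. The hard part throughout is constructing genuine sub/supersolutions in the presence of the singular diffusion term and matching growth rates to boundary margins; the degenerate-gradient supersolution and property (ii) are the most delicate, since there the corrector cannot absorb the diffusion and one must lean on the sign of $X$.
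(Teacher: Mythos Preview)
Your treatment of the nondegenerate case $\nabla\phi(x_0,t_0)\ne 0$ is essentially the paper's argument. The divergence is at $p=0$ and in items (ii)--(iii), where you abandon the perturbed test function method in favour of explicit barrier constructions that are only sketched. These sketches contain real gaps: the ``nearly space-homogeneous subsolution scaled like $\eps^{(1-\alpha)/2}$'' is not specified, and the wedge phase $\beta|(x-x_0)\cdot\eta|$ is not smooth, so it is unclear how you turn it into a genuine $C^2$ subsolution of \eqref{eq} on a cylinder while controlling the singular term $\eps^{\alpha-1}\chi''$. More importantly, you assert that the perturbed test function method ``genuinely breaks down'' at $p=0$; the paper shows it does not.

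The paper's device for the supersolution inequality at $\nabla\phi(x_0,t_0)=0$ is to keep the perturbed test function $\phi^\eps=\eps\chi_p(\phi/\eps+s^\eps)$ but leave the corrector slope $p$ free. Plugging into \eqref{eq} and using \eqref{cell} leaves the single error term
\[
-\chi_p''\bigl(\eps^{\alpha-1}|\nabla\phi(x^\eps,t^\eps)|^2-|p|^2\bigr).
\]
One then splits on the behaviour of $\eps^{\alpha-1}|\nabla\phi(x^\eps,t^\eps)|^2$ along the sequence: if it tends to $0$ (for the choice $p=0$), take $p=0$ and the error vanishes, giving $\phi_t\ge\bar c(0)$; if it stays $\ge\delta>0$, a continuity argument in $p$ produces $p^\eps\ne 0$ with $|p^\eps|^2=\eps^{\alpha-1}|\nabla\phi(x^\eps,t^\eps)|^2$, killing the error exactly and yielding $\phi_t\ge\bar c(|p^\eps|)\ge\bar c(0)$. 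No barriers are needed.

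For (ii) and (iii) the paper also avoids barriers entirely. For (ii), the negative-definite Hessian lets one replace $\phi$ by a function $\tilde\phi(t)$ of $t$ alone with $u\le\phi\le\tilde\phi$ near $(x_0,t_0)$; the perturbed test function $\eps\chi_0(\tilde\phi(t)/\eps+s)$ then has \emph{no} spatial dependence, so the troublesome $\eps^{\alpha-1}|\nabla\phi^\eps|^2$ term is identically zero and one reads off $\phi_t\le\bar c(0)$. For (iii), one shifts the test function to $\phi^h(x,t)=\phi(x+h\eta,t)$; the curvature condition $\eta^tX\eta\ge\delta|\eta|^2$ forces $\nabla\phi^h\ne 0$ at the nearby minimum of $u-\phi^h$, so Case~1 applies directly and gives $\phi^h_t\ge\int_0^1 g$, whence $\phi_t\ge\int_0^1 g$ as $h\to 0$. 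Both arguments are short manipulations of the test function rather than constructions of $\eps$-dependent barriers.
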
 

\begin{proof} 
The fact that, up to a subsequence, $\ue_\alpha$ converges locally uniformly to a Lipschitz function $u$ is proved in Proposition \ref{seq}.
Since $u$ is Lipschitz continuous, then it is differentiable almost everywhere. 

We prove now that $u$ is a viscosity solution to the limit problem. 
We show the statement for supersolutions, since for subsolutions is completely analogous. 

Fix $\phi$ a smooth test function such that  $u(x_0,t_0)=\phi(x_0, t_0)$ and $u< \phi$ elsewhere. 
We consider two cases, depending on the value of $\nabla \phi(x_0,t_0)$.

%We have  possibilities: \begin{enumerate} \item[$1)$]  $\nabla \phi(x_0,t_0)\neq 0$, \item[$2)$] $\nabla \phi(x_0,t_0)=0$ and there exists $\eta\in\R^n$ such that $\eta^T\nabla^2\phi(x_0, t_0) \eta> 2\delta>0$, \item[$3)$] $\nabla \phi(x_0,t_0)=0$ and   $ \nabla^2\phi(x_0, t_0) \leq 0$ in the sense of matrices.
%\end{enumerate}  

\smallskip

{\bf Case $1$: $\nabla \phi(x_0,t_0)=p\neq 0$.} In this case we shall prove that $\phi_t(x_0, t_0)\geq \int_0^1 g(s)ds$.

Define $p^\eps=\eps^{(\alpha-1)/2}p$ and $\chi_{p^\eps}$ the solution to \eqref{cell}, with $\bar c(\eps^{(\alpha-1)/2}|p|)$. We define the perturbed test function as in the proof of Theorem \ref{conv1}: 
\[\phi^\eps(x,t)=\eps \chi_{p^\eps}\left(\frac{\phi(x,t)}{\eps}+s\right). \]  
Since $\chi_{p^\eps}(z)$ converges uniformly to $z$ as $\eps\to 0$ by Proposition \ref{cellalpha}, we get that $\phi^\eps\to \phi$ locally uniformly for every $s$. Reasoning as in the proof of Theorem \ref{conv1}, we get that there exist $s^\eps$, $x^\eps$ $t^\eps$ such that 
$\ue(x^\eps, t^\eps)=\phi^\eps(x^\eps, t^\eps)$, $\ue\geq \phi^\eps$  and $(x^\eps, t^\eps)\to (x_0,t_0)$ as $\eps\to 0$. 
So, plugging $\phi^\eps$ into Equation \eqref{eq} computed at $(x^\eps, t^\eps)$ we obtain 
\[ 0=\ue_t-\eps^\alpha \Delta \ue -g\left(\frac{\ue}{\eps}\right) \leq \chi'_{p^\eps}  \phi_t- \eps^{\alpha}\chi'_{p^\eps} \Delta \phi-\chi''_{p^\eps}  \eps^{\alpha-1}|\nabla\phi|^2-
g(\chi_{p^\eps}).\]
Using the fact that $\chi_{p^\eps}$ solves \eqref{cell}, we get 
\begin{eqnarray*}  0 &\leq & \chi'_{p^\eps}\left(\phi_t(x_0, t_0)- \int_0^1 g(s)ds\right)- \chi'_{p^\eps}
\left( \bar c(\eps^{\frac{\alpha-1}{2}}|p|)- \int_0^1 g(s)ds\right) \\
 &-& \chi'_{p^\eps} \left(\phi_t(x_0, t_0)-\phi_t(x^\eps, t^\eps)   +\eps^\alpha  \Delta \phi(x^\eps, t^\eps)\right) 
\\
&-&\chi''_{p^\eps} \eps^{\alpha-1} \left( |\nabla\phi(x^\eps, t^\eps)|^2-|\nabla\phi(x_0,t_0)|^2\right).
\end{eqnarray*} 
Using \eqref{stima1}, \eqref{stima2}, \eqref{limitcorrectorinfty} and the regularity of $\phi$, letting $\eps\to 0$
we conclude that $\phi_t(x_0, t_0)\geq \int_0^1 g(s)ds$. 

\smallskip

{\bf Case $2$: $\nabla \phi(x_0,t_0)=0$.} In this case we shall prove that $\phi_t(x_0, t_0)\geq (\int_0^1 g^{-1}(s)ds)^{-1}$.

As in {Case 1}, we let 
\[\phi^\eps(x,t)=\eps \chi_{p}\left(\frac{\phi(x,t)}{\eps}+s^\eps\right), \]  
where $p\in \R^n$ will be determined later. As above, there exist $s^\eps$, $x^\eps$ $t^\eps$, depending continuously  on $p$, such that 
$\ue(x^\eps, t^\eps)=\phi^\eps(x^\eps, t^\eps)$, $\ue\geq \phi^\eps$  and $(x^\eps, t^\eps)\to (x_0,t_0)$ as $\eps\to 0$. 
Plugging $\phi^\eps$ into \eqref{eq}, evaluating the equation at $(x^{\eps}, t^{\eps})$,
and recalling \eqref{cell} we obtain 
\begin{eqnarray*} 
0 &\leq& \chi'_{p}  \phi_t(x^\eps, t^\eps)
- \eps^{\alpha}\chi'_{p} \Delta \phi(x^\eps, t^\eps)-\chi''_{p}  \eps^{\alpha-1}|\nabla\phi(x^\eps, t^\eps)|^2-
g(\chi_{p})
\\ 
&=& \chi'_{p} \left(\phi_t(x^\eps, t^\eps) - \bar c(|p|)  \right) -\eps^\alpha \chi'_{p} \Delta \phi(x^\eps, t^\eps)
- \chi''_{p} \left( \eps^{\alpha-1} |\nabla\phi(x^\eps, t^\eps)|^2-|p|^2\right).
\end{eqnarray*}
We now consider two subcases:

\smallskip

{\bf Case $2\,a$: for $p=0$ we have $\eps^{\alpha-1} |\nabla\phi(x^\eps, t^\eps)|^2\to 0$, up to a subsequence as $\eps\to 0$.} We choose $p=0$ and we get
\[
0\le \chi'_{0} \left(\phi_t(x^\eps, t^\eps) - \bar c(0)  \right) 
-\eps^\alpha \chi'_{0} \Delta \phi(x^\eps, t^\eps)
- \chi''_{0} \eps^{\alpha-1} |\nabla\phi(x^\eps, t^\eps)|^2.
\]
Using the assumption and passing to the limit as $\eps\to 0$, we then get $\phi_t(x_0,t_0)\ge \bar c(0)$.

\smallskip

{\bf Case $2\,b$: for $p=0$ we have $\eps^{\alpha-1} |\nabla\phi(x^\eps, t^\eps)|^2\ge \delta$, for some $\delta>0$ and $\eps$ small enough.}
For $p\ne 0$ we have
\[
0\le \chi'_{p} \left(\phi_t(x^\eps, t^\eps) - \bar c(|p|)  \right) -\eps^\alpha \chi'_{p} \Delta \phi(x^\eps, t^\eps)
- \chi''_{p} |p|^2\left( \frac{\eps^{\alpha-1} |\nabla\phi(x^\eps, t^\eps)|^2}{|p|^2}-1\right).
\]
Notice that, recalling our assumption, we have 
\[
\lim_{|p|\to 0}\frac{\eps^{\alpha-1} |\nabla\phi(x^\eps, t^\eps)|^2}{|p|^2} = +\infty
\qquad \text{and} 
\lim_{|p|\to \infty}\frac{\eps^{\alpha-1} |\nabla\phi(x^\eps, t^\eps)|^2}{|p|^2} = 0.
\]
Then, by a continuity argument, there exists $p^\eps\ne 0$ such that $\eps^{\alpha-1} |\nabla\phi(x^\eps, t^\eps)|^2=|p^\eps|^2$.
For $p=p^\eps$ it then follows 
\[
0\le \chi'_{p} \left(\phi_t(x^\eps, t^\eps) - \bar c(|p^\eps|)  \right) -\eps^\alpha \chi'_{p} \Delta \phi(x^\eps, t^\eps),
\]
which gives $\phi_t(x_0,t_0)\ge \bar c(0)$, in the limit $\eps\to 0$, recalling that $\bar c(|p|)\ge \bar c(0)$ for any $p\in\R^n$.

\smallskip

We now prove assertions $i)$, $ii)$, $iii)$.

{\em Proof of $i)$.} First of all observe that repeating the proof of {Case 1}, we get that  that $u_t=\int_0^1 g(s)ds$ almost everywhere in $\Omega$. If this is true, then $u_t=\int_0^1 g(s)ds$ in the viscosity sense in $\Omega$. Indeed, let $\rho_\delta$ be a sequence of standard mollifiers. So $u_\delta=u\ast \rho_\delta\to u$ uniformly and $(u_\delta)_t=\int (u_t\ast \rho_\delta) =\int_0^1 g(s)ds$ everywhere in $\Omega$. The conclusion then follows from the stability of viscosity solutions. 

\smallskip

{\em Proof of $ii)$.} 
Let  $\phi$ such that $u-\phi$ has a strict maximum at $(x_0,t_0)$, with $\nabla \phi(x_0,t_0)=0$, and   $ \nabla^2\phi(x_0,t_0) <0$ in the sense of matrices.
Then  we show that  $\phi_t(x_0,t_0)\leq  (\int_0^1 \frac{1}{g})^{-1}$. 

We define the function \[\tilde\phi( t)=  u(x_0,t_0)+\phi_t(x_0,t_0)(t-t_0) +C(t-t_0)^2.\]
Choosing appropriately $C$ and using the fact that $\nabla^2\phi(x_0,t_0)<0$, there exists $r,\tau>0$ such that $u(x,t)\le \phi(x,t)\leq \tilde \phi(t)$ for every $(x,t)\in B(x_0,r)\times (t_0-\tau,t_0+\tau)$. 
Observe also that $u(x_0, t_0)=\tilde\phi(t_0)$ and $\tilde\phi_t(t_0)=\phi_t(x_0, t_0)$. 

As above, we let  \[\phi^\eps ( t)= \eps \chi_0\left(\frac{\tilde \phi(t)}{  \eps}+s\right),\]
where $\chi_0$ is the solution to \eqref{cell0}. 
Note that by the properties of $\chi_0$,  we have $\phi^\eps(t)\to \tilde \phi(t)$ 
 locally uniformly as $\eps\to 0$. 
As in the proof of Theorem \ref{conv1},
we choose $s$ such that
there exists $(x^\eps, t^\eps)\to (x_0,t_0)$, with $u^\eps(x^\eps, t^\eps)=\phi^\eps(t^\eps)$ and $u^\eps\leq \phi^\eps$. 

Plugging $\phi_\eps$ into Equation \eqref{eq} computed at $(x^\eps, t^\eps)$, we get 
\[0=u^\eps_t-\eps^\alpha\Delta u^\eps-g\left(\frac{u^\eps}{\eps}\right) 
\geq (\phi^\eps)_t-g\left(\frac{\phi^\eps}{\eps}\right)=\tilde \phi_t(t_0)\chi_0'-g(\chi_0)= \chi_0'(\tilde\phi_t(t_0)-\bar c(0))\] from which we conclude. 

\smallskip

{\em Proof of $iii)$.} Let  $\phi$ such that $u-\phi$ has a strict minimum at $(x_0,t_0)$, 
with $\nabla \phi(x_0,t_0)=0$, and $\eta^t \nabla^2\phi(x_0,t_0)\eta\ge \delta |\eta|^2$.
We shall show that $\phi_t(x_0, t_0)\geq \int_0^1 g(s)ds$.

Let $V$ be a neighborhood of $(x_0,t_0)$ such that $\eta^t \nabla^2\phi(x,t)\eta> \delta>0$ for every $(x,t)\in V$. 
Let $\phi^h(x,t)=\phi(x+h\eta, t)$. Observe that $\phi^h\to \phi$ uniformly as $h\to 0$.  Let $(x_h,t_h)$ a minimum point of $u-\phi^h$ in $V$. Then, eventually passing to a subsequence,  we have 
$(x_h,t_h)\to (x_0,t_0)$. 

Observe that  at points $(x,t)$ where  $\nabla \phi^h(x ,t)=\nabla \phi(x+h\eta , t)=0$, then $u(x ,t )-\phi^h(x ,t )>0$. Indeed 
\[u(x ,t )-\phi^h(x ,t )\geq  \phi(x ,t)-\phi(x+h\eta, t)=  \frac{1}{2} h^2 \eta^t \nabla^2\phi(x+h\eta,t )\eta+o(h^2)>0.\] Since  for  $h$ sufficiently small
\begin{align*}
(u-\phi^h)(x_h,t_h) &\leq u(x_0,t_0)-\phi^h(x_0,t_0)= \phi(x_0,t_0)-\phi(x_0+h\eta, t_0)
\\ &= -\frac{1}{2} h^2 \eta^t \nabla^2\phi(x_0,t_0)\eta+o(h^2)<0,
\end{align*} 
it follows that $\nabla \phi^h(x_h,t_h)\neq 0$ and $u-\phi^h$ has a minimum at $(x_h,t_h)$. Repeating the proof of {Case 1},  we get that $\phi^h_t(x_h,t_h)=\phi_t(x_h+h\eta,t_h)\geq \int_0^1 g$. Letting $h\to 0$ we obtain the result.   
\end{proof} 

%\begin{remark}\label{remphit}\rm
%From the proof of $ii)$ is follows that, if there exists 
%a test function $\phi$, depending only on $t$, such that 
%$\phi(t_0)=u(x_0,t_0)$
%and $\phi\ge u$ in a neighborhood of $(x_0,t_0)$,
%then $\phi_t(t_0)\le \bar c_-(0)$.  
%\end{remark}
From Theorem \ref{conv3} and Corollary \ref{corollario2} we deduce immediately the following estimates. 
\begin{corollary}\label{corollario0}  
%Let $\ue_\alpha$ be  the  solution  to \eqref{eq}.
Every uniform limit $u$ of $u^\eps_\alpha$ satisfies 
\begin{multline*} u_0(x)+ t\left(\int_0^1 \frac1{g(s)}\,ds\right)^{-1}\leq u(x,t)\\ \leq \min\left( u_0(x)+ t\int_0^1 g(s)ds, \ \sup_{\R^n} u_0 + t \left(\int_0^1 \frac{1}{g(s)}ds\right)^{-1}\right).\end{multline*}
\end{corollary}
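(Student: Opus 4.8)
The plan is to extract from Theorem \ref{conv3} two opposite one-sided inequalities, each obtained by comparing $u$ with an explicit affine-in-time barrier, and then to add the bound already furnished by Proposition \ref{corollario2}. The guiding observation is that, although the discontinuous equation \eqref{limit01} admits no comparison principle, the subsolution and supersolution properties of $u$ can each be relaxed to a problem with a \emph{constant} (hence continuous) Hamiltonian, for which comparison does hold via Proposition \ref{limitprop}.

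For the lower bound, note that $\bar c_-(|p|)\ge \bar c(0)=\left(\int_0^1 g(s)^{-1}\,ds\right)^{-1}$ for every $p\in\R^n$: indeed $\bar c_-(0)=\bar c(0)$, while for $p\neq 0$ one has $\bar c_-(|p|)=\int_0^1 g(s)\,ds\ge \bar c(0)$ by the arithmetic--harmonic mean inequality recorded in the Introduction. Since $u$ is a supersolution of $u_t-\bar c_-(|\nabla u|)=0$ and the lower semicontinuous envelope of $\bar c_-$ is $\bar c_-$ itself, every $(p,X,\lambda)\in D^-u$ satisfies $\lambda\ge \bar c_-(|p|)\ge \bar c(0)$; thus $u$ is a supersolution of the constant-coefficient equation $u_t=\bar c(0)$ with datum $u_0$. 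The function $v(x,t)=u_0(x)+t\,\bar c(0)$ is a classical (hence viscosity) solution of the same equation with the same datum, so the comparison principle of Proposition \ref{limitprop} gives $v\le u$, which is exactly the asserted lower bound.

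For the upper bound I would argue symmetrically. The upper semicontinuous envelope of $\bar c_-$ is the constant $\int_0^1 g(s)\,ds$, so the subsolution property of $u$ yields $\lambda\le \int_0^1 g(s)\,ds$ for every $(p,X,\lambda)\in D^+u$; hence $u$ is a subsolution of $u_t=\int_0^1 g(s)\,ds$ with datum $u_0$. Comparing with the solution $w(x,t)=u_0(x)+t\int_0^1 g(s)\,ds$ gives $u\le w$. Combining this with the estimate $u(x,t)\le \sup_{\R^n}u_0+t\,\bar c(0)$ of Proposition \ref{corollario2} produces the minimum on the right-hand side, and the three inequalities together are precisely the claim. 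The only point requiring care is the first one: one must verify that the pointwise constant speeds $\bar c(0)$ and $\int_0^1 g$ bound $\bar c_-$ from below and above respectively, so that the genuinely discontinuous operator is squeezed between two continuous ones to which Proposition \ref{limitprop} applies; everything else is a direct comparison.
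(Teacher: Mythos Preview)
Your argument is correct and is exactly the unpacking the paper has in mind when it says the estimate follows ``immediately'' from Theorem~\ref{conv3} and Proposition~\ref{corollario2}: squeeze $\bar c_-$ between the two constants $\bar c(0)$ and $\int_0^1 g$, so that $u$ is a supersolution of $u_t=\bar c(0)$ and a subsolution of $u_t=\int_0^1 g$, then compare with the explicit affine-in-$t$ solutions and add the upper barrier from Proposition~\ref{corollario2}. Two minor wording issues: Proposition~\ref{limitprop} states uniqueness rather than a comparison principle (though the latter is the standard tool behind it, from \cite{bcd}), and $v(x,t)=u_0(x)+t\,\bar c(0)$ is not a \emph{classical} solution since $u_0$ is only Lipschitz---but it is a viscosity solution, which is all you need.
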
 
We  now analyze more in detail the behavior of the  limit function for some classes of initial data.
\begin{corollary}\label{corollario1}  
Assume that either $u_0$ is convex and nonconstant
or $u_0$ is unbounded from above and there exists $\eta\in\R^n$, with $|\eta|=1$, 
such that 
\begin{equation}\label{equd}
\nabla u_0(x)\cdot \eta\ge \delta\qquad \text{for some $\delta>0$, and for a.e. $x\in \R^n$}.
\end{equation}
%or  $u_0(x)=\max(u_1(x),-u_2(x))$ where $u_1,\,u_2$ satisfy \eqref{equd}. .....
%Let $\ue_\alpha$ be  the  solution   to \eqref{eq} with $0<\alpha<1$.  
Then the solutions $u^\eps_\alpha$ converge (locally) uniformly to the function
\begin{equation}\label{elpo} u(x,t)=u_0(x)+ t\int_0^1 g(s)ds.\end{equation}
\end{corollary}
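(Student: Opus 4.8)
The plan is to establish the two inequalities separately, showing that the function in \eqref{elpo} coincides with the limit $u$. The lower bound $u(x,t) \geq u_0(x) + t\int_0^1 g(s)ds$ will come from monotonicity, while the upper bound follows from Corollary \ref{corollario0}, where the first term in the minimum is exactly \eqref{elpo}. So the heart of the matter is proving that the second term in the minimum does not bind, i.e. that $u_0(x) + t\int_0^1 g(s)ds$ is the correct value, which amounts to showing $\nabla u \neq 0$ almost everywhere so that part $i)$ of Theorem \ref{conv3} applies globally.

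I would treat the two hypotheses in turn. In the monotone case, \eqref{equd} is precisely condition \eqref{mono} of Proposition \ref{seq}, which transfers to the limit: $\nabla u(x,t)\cdot \eta \geq \delta$ for a.e. $(x,t)$, and in particular $\nabla u \neq 0$ a.e. Then part $i)$ of Theorem \ref{conv3} with $\Omega = \R^n\times(0,+\infty)$ gives that $u$ solves $u_t = \int_0^1 g(s)ds$ in the viscosity sense everywhere; combined with the initial datum and the uniqueness for this (now continuous) equation via Proposition \ref{limitprop} — here $\bar c \equiv \int_0^1 g(s)ds$ is constant hence continuous — we identify $u$ with \eqref{elpo}, which also forces convergence of the full sequence. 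In the convex nonconstant case, I would argue that convexity of $u_0$ propagates so that $u(\cdot,t)$ stays convex and nonconstant; a convex nonconstant function has $\nabla u \neq 0$ off a measure-zero set (the gradient vanishes only on the set where a convex function attains its minimum, which for a nonconstant convex function is a lower-dimensional or at least null set), so again part $i)$ applies a.e.\ and the same uniqueness argument closes the proof.

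The main obstacle is the convex case: I need to justify both that convexity is preserved in the limit and that $\nabla u \neq 0$ almost everywhere. Preservation of convexity should follow from the comparison principle applied to translates — convexity of $u_0$ means $u_0(x+h) + u_0(x-h) \geq 2u_0(x)$, and one can run the same translation-plus-comparison scheme as in Proposition \ref{seq} on the pair $u^\eps_\alpha(\cdot+h,\cdot)$ and $u^\eps_\alpha(\cdot-h,\cdot)$ to obtain $u(x+h,t)+u(x-h,t)\geq 2u(x,t)$, whence $u(\cdot,t)$ is convex. The delicate point is the measure of $\{\nabla u = 0\}$: for a nonconstant convex function the set where the gradient vanishes is exactly the (possibly empty) set of global minimizers, which is convex, and if it had positive measure the function would be constant on an open set and, being convex and nonconstant, one derives a contradiction with the strict monotonicity off that set. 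I would make this rigorous by noting that if $\nabla u(\cdot,t)=0$ on a set of positive measure then by convexity $u(\cdot,t)$ is constant there, and the Lipschitz bound \eqref{estlip} together with convexity forces global constancy, contradicting the nonconstancy inherited from $u_0$. Once $\nabla u \neq 0$ a.e.\ is secured, the identification is routine via part $i)$ and the uniqueness in Proposition \ref{limitprop}.
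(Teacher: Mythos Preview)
Your treatment of the monotone case is essentially the paper's argument: Proposition~\ref{seq} transfers \eqref{equd} to the limit, so $\nabla u\neq 0$ a.e.\ and Theorem~\ref{conv3}\,i) applies on all of $\R^n\times(0,+\infty)$.

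The convex case, however, has two genuine gaps. First, the ``translation-plus-comparison'' scheme from Proposition~\ref{seq} does \emph{not} yield convexity preservation here: that scheme works because $u^\eps_\alpha(\cdot+h,\cdot)+k\eps$ is again a solution (periodicity of $g$), but to get $u^\eps_\alpha(x+h,t)+u^\eps_\alpha(x-h,t)\geq 2u^\eps_\alpha(x,t)$ you would need the \emph{average} of two solutions to be a supersolution, and for the semilinear equation $u_t-\eps^\alpha\Delta u=g(u/\eps)$ this fails unless $g$ is concave. Second, even if $u(\cdot,t)$ were convex and nonconstant, the set $\{\nabla u(\cdot,t)=0\}$ is the set of global minimizers, which can perfectly well have positive measure (think of $\max(|x|-1,0)$); the Lipschitz bound \eqref{estlip} does nothing to prevent this, so your claim of ``global constancy'' is false and Theorem~\ref{conv3}\,i) cannot be invoked on the whole space.

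The paper avoids both issues by a different, simpler route: a convex nonconstant $u_0$ is the supremum of its nonconstant affine minorants $v_{a,b}(x)=a\cdot x+b$ with $a\neq 0$. Each $v_{a,b}$ satisfies the monotonicity hypothesis \eqref{equd} (take $\eta=a/|a|$, $\delta=|a|$), so by the first part of the corollary the corresponding limits are $v_{a,b}(x)+t\int_0^1 g$. Comparison then gives
\[
u(x,t)\ \geq\ \sup_{a\neq 0,\ v_{a,b}\leq u_0}\Big(a\cdot x+b\Big)+t\int_0^1 g(s)\,ds\ =\ u_0(x)+t\int_0^1 g(s)\,ds,
\]
and the upper bound is Corollary~\ref{corollario0}. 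No convexity preservation and no measure-theoretic argument on $\{\nabla u=0\}$ are needed.
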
 

\begin{proof} Assume first that $u_0$ satisfies \eqref{equd}. By Proposition \ref{seq}, 
every uniform limit $u$ to $u^\eps_\alpha$ satisfies \eqref{equd}. In particular $\overline{\{(x,t)\ |\ \nabla u(x,t)\neq 0\}}=\R^n$, 
so that by Theorem \ref{conv3} $i)$, we get \eqref{elpo}.  

If $u_0$ is convex and nonconstant then it is the supremum of all 
the linear functions $v_{a,b}(x)=a\cdot x+b$ such that $v_{a,b}\le u_0$ and $a\ne 0$.
Notice that, letting
$u_{a,b}$ be the a uniform limit of the solutions to \eqref{eq} with initial datum 
$v_{a,b}$ with $a\ne 0$, by the previous discussion we know that 
$u_{a,b}(x,t)= a\cdot x+b+t\int_0^1 g(s)ds$, for all $x\in\R^n$ and $t>0$.
As a consequence, by comparison principle we get 
\[
u(x,t)\ge \sup_{a,b:\,v_{a,b}\le u_0} u_{a,b}(x,t) =
\sup_{a,b:\,v_{a,b}\le u_0} (a\cdot x+b)+t\int_0^1 g(s)ds
= u_0(x)+t\int_0^1 g(s)ds.
\]
The opposite inequality follows from Corollary \ref{corollario0}.
\end{proof} 

\begin{proposition} \label{pro3}  
Let $u_0(x)=-C|x|$ with $C>0$ and 
let $\ue_\alpha$ be  the  solutions   to \eqref{eq}. Then $u^\eps_\alpha$ converges locally uniformly to the function
\begin{equation}\label{eqeq3}
v(x,t)  :=\min\left[ u_0(x)+t\int_0^1 g(s)ds, \ t\left(\int_0^1 \frac{1}{g(s)}ds\right)^{-1}\right].
\end{equation}
\end{proposition}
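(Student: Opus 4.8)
The plan is to sandwich the limit $u$ between $v$ from above and from below. The upper bound $u\le v$ is immediate: since $u_0(x)=-C|x|$ has supremum $0$, attained at the origin, Corollary~\ref{corollario0} reads
\[
u_0(x)+t\Big(\int_0^1 \tfrac{1}{g(s)}ds\Big)^{-1}\le u(x,t)\le \min\Big(u_0(x)+t\int_0^1 g(s)ds,\ t\Big(\int_0^1 \tfrac{1}{g(s)}ds\Big)^{-1}\Big),
\]
and the right-hand side is exactly $v(x,t)$. It remains to prove $u\ge v$. Throughout I write $\bar c(0)=(\int_0^1 g(s)^{-1}ds)^{-1}$, $w_1(x,t)=u_0(x)+t\int_0^1 g(s)ds$, and let $R(t)=\frac{t}{C}(\int_0^1 g(s)ds-\bar c(0))$ be the radius where the two branches of $v$ cross, so the conical branch $w_1$ is selected for $|x|>R(t)$ and the flat branch $t\bar c(0)$ for $|x|\le R(t)$.

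First I would record two structural facts. Since $u_0$ and equation~\eqref{eq} are rotation invariant, every $u^\eps_\alpha$, hence $u$, is radially symmetric; moreover a moving-plane comparison (comparing $u(\cdot,t)$ with its reflection across any hyperplane $\{x\cdot\eta=a\}$ with $a>0$, which are ordered at $t=0$ because the near point is closer to the origin) shows $u(\cdot,t)$ is radially non-increasing. Evaluating both inequalities of Corollary~\ref{corollario0} at $x=0$ pins the apex: $u(0,t)=t\bar c(0)$.

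For the conical region $\{|x|>R(t)\}$ I would prove $u\ge w_1$ by exhibiting an explicit subsolution of~\eqref{eq}. Fix $r_0>0$ and let $\chi^\eps$ be the increasing solution of the cell problem~\eqref{cella} with slope $|p|=C$, whose speed $\bar c_\alpha^\eps(C)=\bar c(C\eps^{(\alpha-1)/2})$ tends to $\int_0^1 g(s)ds$ by Proposition~\ref{cellalpha} and~\eqref{limitcorrectorinfty}. Set
\[
\psi^\eps(x,t)=\eps\,\chi^\eps\Big(\tfrac{-C|x|+c^\eps t}{\eps}\Big),\qquad c^\eps:=\bar c_\alpha^\eps(C)-\eps^\alpha\tfrac{C(n-1)}{r_0}.
\]
Using the cell equation one computes, for $|x|\ge r_0$,
\[
\psi^\eps_t-\eps^\alpha\Delta\psi^\eps-g\big(\tfrac{\psi^\eps}{\eps}\big)=(\chi^\eps)'\Big[(c^\eps-\bar c_\alpha^\eps(C))+\eps^\alpha\tfrac{C(n-1)}{|x|}\Big]\le 0,
\]
so $\psi^\eps$ is a genuine subsolution of~\eqref{eq} on $\{|x|\ge r_0\}$, converging to $w_1$ as $\eps\to 0$; note the concavity of the cone forces the curvature correction $-\eps^\alpha C(n-1)/r_0$, which vanishes in the limit (and is absent when $n=1$). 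Comparing $\psi^\eps$ with $u^\eps_\alpha$ on exterior cylinders $\{|x|\ge r_0\}\times[0,T]$, then letting $\eps\to0$, should yield $u\ge w_1$ on $\{|x|>R(t)\}$. Finally, on the flat region the bound follows for free: by continuity $u=w_1=t\bar c(0)$ on the free boundary $\{|x|=R(t)\}$, so radial monotonicity gives $u(x,t)\ge t\bar c(0)$ for $|x|\le R(t)$, and with the upper bound this forces $u=t\bar c(0)$ there; hence $u=v$.

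The hard part will be the comparison on the conical region. The subsolution $\psi^\eps$ carries the \emph{fast} conical profile, so it overshoots $u$ as soon as $|x|$ enters the plateau $\{|x|<R(t)\}$; consequently the inner boundary $\{|x|=r_0\}$ cannot simply be sent to the origin, and a uniform downward shift that fixes the boundary also cancels the speed gain. Controlling this inner boundary is exactly the difficulty created by the free boundary $|x|=R(t)$ separating the fast branch (speed $\int_0^1 g$) from the slow flat branch (speed $\bar c(0)$), which is where the discontinuity of $\bar c_-$ at $p=0$ enters. I expect to resolve it by a continuation-in-time argument, propagating the identity $u=w_1$ inward from the far field while using the apex value $u(0,t)=t\bar c(0)$ and radial monotonicity to pin the plateau and supply the missing inner-boundary control.
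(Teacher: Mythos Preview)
Your upper bound and the structural facts (radial symmetry, the apex value $u(0,t)=t\bar c(0)$) are correct and match the paper. The gap is exactly where you locate it: the comparison of $\psi^\eps$ with $u^\eps_\alpha$ on $\{|x|\ge r_0\}\times[0,T]$ requires $\psi^\eps\le u^\eps_\alpha$ on the lateral boundary $\{|x|=r_0\}$, but there the only a~priori lower bound available is $u^\eps_\alpha\gtrsim -Cr_0+t\bar c(0)$ from Corollary~\ref{corollario0}, while $\psi^\eps\to -Cr_0+t\int_0^1 g$, which is strictly larger for $t>0$. No choice of $r_0$ fixes this, and a downward shift of $\psi^\eps$ large enough to repair the boundary kills the conclusion, as you note. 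The ``continuation-in-time'' idea you sketch does not close the loop: to push the identity $u=w_1$ inward from some radius $r_1$ you would again need inner-boundary control at a smaller $r_2$, and the same obstruction recurs. Without a genuinely new ingredient this barrier scheme does not yield the lower bound.

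The paper avoids the inner-boundary issue entirely by working at the limit level rather than at the $\eps$ level, and by \emph{inverting} the radial profile. Writing $u(x,t)=f(|x|,t)$, Theorem~\ref{conv3} gives $f_t\in[\bar c(0),\int_0^1 g]$ a.e.\ and $f_t=\int_0^1 g$ wherever $f_r\ne 0$; together with $f(0,t)=t\bar c(0)$ this is all that is used. Replacing $f$ by its nonincreasing envelope $\tilde f(r,t)=\min_{r'\le r}f(r',t)$ (which sidesteps your moving-plane argument) preserves these properties and makes $\tilde f(\cdot,t)$ invertible. The inverse $h(u,t)$, defined by $\tilde f(h(u,t),t)=u$, then satisfies $h_u\le -1/C$ and, crucially, the \emph{linear} relation $h_t=-\big(\int_0^1 g\big)\,h_u$ a.e.\ (the discontinuity of $\bar c_-$ at $p=0$ disappears because $h$ is only defined on the range of $\tilde f$, where $\tilde f_r<0$). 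Shifting by the plateau speed, $\tilde h(u,t)=h(u+t\bar c(0),t)$, one integrates $\tilde h_t\ge \tfrac{1}{C}\big(\int_0^1 g-\bar c(0)\big)$ directly from the initial data $\tilde h(u,0)=-u/C$, and translating back gives exactly $\tilde f\ge v$, hence $u\ge v$. The key move---passing to the level-set function $h$---linearises the problem and removes the free boundary from the picture; this is what your barrier approach is missing.
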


\begin{proof}
Letting $u$ be a limit of $u^\eps_\alpha$ given by Proposition \ref{seq}, we want to show that $u=v$.
By Corollary \ref{corollario0} we know that $u\le v$, so we are left to prove the opposite inequality.

First of all, we observe that, since $u_0$ is  radially  symmetric, then also  $u^\eps_\alpha(\cdot, t)$ is radially symmetric for every $t$, and then also $u(\cdot, t)$. 
So, we can write $u(x,t)=f(|x|,t)$, where $f(r,t):[0, +\infty)\times [0, +\infty)\to \R$ is a Lipschitz continuous function, with Lipschitz constant in $r$ less or equal to $C$. 
By Theorem \ref{conv3}, $f_t\in \big[\big(\int_0^1 \frac{1}{g(s)}ds\big)^{-1}, \ \int_0^1 g(s)ds\big]$ for a.e. $(r,t)$ and $f_t=\int_0^1 g(s)ds$ if $f_r\neq 0$. 
By Corollary \ref{corollario0} we also have that $f(0,t)=u(0,t)=t \big(\int_0^1 \frac{1}{g(s)}ds\big)^{-1}$ for every $t\geq 0$. 

Let $\tilde f(r,t):= \min_{r'\le r}f(r',t)$ be the largest nonincreasing function less or equal to $f$. Notice that $\tilde f(r,0)=-Cr$, and moreover  $\tilde f$ satisfies the same conditions as $f$, 
that is, there holds $\tilde f_r\in [-C,0]$,
$\tilde f_t\in \big[\big(\int_0^1 \frac{1}{g(s)}ds\big)^{-1}, \ \int_0^1 g(s)ds\big]$ for a.e. $(r,t)$ and $\tilde f_t=\int_0^1 g(s)ds$ if $\tilde f_r< 0$, and 
$\tilde f(0,t)=t \big(\int_0^1 \frac{1}{g(s)}ds\big)^{-1}$ for every $t\geq 0$. Since $\tilde f\le f$
it is enough to show that 
\begin{equation}\label{eqeq4}
\tilde f(r,t)\ge \min\left[ -Cr+t\int_0^1 g(s)ds, \ t\left(\int_0^1 \frac{1}{g(s)}ds\right)^{-1}\right]
\qquad \forall\, r,t\ge 0\,.
\end{equation}  
For $t\ge 0$ we let
$h(\cdot ,t):\big(-\infty, t \big(\int_0^1 \frac{1}{g(s)}ds\big)^{-1}\big]\to [0,+\infty)$ 
be the inverse of $\tilde f(\cdot,t)$, that is, $\tilde f(h(u,t),t)=u$ for a.e. $u\in (-\infty, t \big(\int_0^1 \frac{1}{g(s)}ds\big)^{-1}\big]$. In particular $h(u,0)=-\frac{u}{C}$. 
Then $h$ is nonincreasing in $u$,  $h_u(u,t)\le -1/C$ a.e., and $h_t(u,t)= -\big(\int_0^1 g(s)ds \big) h_u(u,t)\ge \big(\int_0^1 g(s)ds \big)/C$ for a.e. $(u,t)$. 

Let also $\tilde h(u,t):(-\infty, 0]\times [0, +\infty)\to [0,+\infty)$ be defined as
\[
\tilde h(u,t):= h\left(u+t \left(\int_0^1 \frac{1}{g(s)}ds\right)^{-1},t\right),
\]
so that there holds $\tilde h(u,0)=-u/C$,
$\tilde h_u(u,t)\le -1/C$ and 
$$
\tilde h_t(u,t)= \left[  \left(\int_0^1 \frac{1}{g(s)}ds\right)^{-1} - 
\int_0^1 g(s)ds \right] h_u(u,t)\ge \frac 1C
\left[  \int_0^1 g(s)ds - \left(\int_0^1 \frac{1}{g(s)}ds\right)^{-1}  \right] 
$$ 
for a.e. $(u,t)$. As a consequence, we get
\[
\tilde h(u,t)\ge -\frac uC + \frac tC \left[  \int_0^1 g(s)ds - \left(\int_0^1 \frac{1}{g(s)}ds\right)^{-1}  \right] 
\qquad \forall\, u,t\ge 0\,.
\] This, by definition,  reads $C h(u,t)\geq -u+ t\int_0^1 g(s)ds= -\tilde f(h(u,t),t) + t\int_0^1 g(s)ds$,
which is equivalent to \eqref{eqeq4}. This concludes the proof.
\end{proof}

\begin{remark}\label{remarkdati} \upshape  It is easy to check that the same conclusion of Proposition \ref{pro3} 
applies  to the case 
in which  $u_0(x)=\phi(|x-x_0|)$, for some $x_0\in\R^n$, with $\phi:[0, +\infty)\to [0, +\infty)$ is a  Lipschitz nonincreasing function. When $n=1$, it also applies to any Lipschitz initial datum such that 
$u_0$ is nondecreasing on $(-\infty,x_0]$ and nonincreasing on $[x_0,+\infty)$, for some $x_0\in\R$.
\end{remark} 

From Proposition \ref{pro3}, Corollary \ref{corollario0} and the comparison principle, we get the following convergence result.

\begin{corollary} \label{corollario3}  
Let $u_0(x)$ be a Lipschitz function bounded from above, let $\ue_\alpha$ be  the  solutions   to \eqref{eq}
and let $u$ be a limit of $\ue_\alpha$. Then   
$$
\lim_{t\to +\infty} \left[u(x,t)-\sup_{\R^n}u_0-t\left(\int_0^1 \frac{1}{g(s)}ds\right)^{-1}\right]=0
\qquad \text{locally uniformly.}$$
\end{corollary}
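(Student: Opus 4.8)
The plan is to sandwich $u(x,t)$ between two expressions both converging to $\sup_{\R^n}u_0+t\left(\int_0^1\frac1{g(s)}ds\right)^{-1}$ as $t\to+\infty$. Write $M:=\sup_{\R^n}u_0$. The upper bound is immediate from Corollary \ref{corollario0}, which already gives
\[
u(x,t)\le M+t\left(\int_0^1\frac1{g(s)}ds\right)^{-1}\qquad\forall (x,t),
\]
so that $u(x,t)-M-t\left(\int_0^1\frac1{g(s)}ds\right)^{-1}\le 0$. The entire content is therefore the matching lower bound, and for this I would build a barrier out of a downward cone and invoke the comparison principle together with Proposition \ref{pro3}.

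Fix $\eta>0$ and choose $x_\eta\in\R^n$ with $u_0(x_\eta)>M-\eta$. Since $u_0$ is Lipschitz with constant $L$, the cone $w_0(x):=(M-\eta)-L|x-x_\eta|$ satisfies $w_0\le u_0$ on $\R^n$. By the comparison principle for \eqref{eq} the corresponding solutions obey $w^\eps\le u^\eps$, and passing to the limit along the subsequence defining $u$ yields $w\le u$, where $w$ is the limit of the solutions with datum $w_0$. Up to the additive constant $M-\eta$ and the translation by $x_\eta$, the datum $w_0$ is exactly the radial datum $-L|x|$ treated in Proposition \ref{pro3} (see also Remark \ref{remarkdati}). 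Equation \eqref{eq} is invariant under $x$-translations, and its limit is invariant under addition of constants to the initial datum, since adding $\eps[c/\eps]\in\eps\Z$ to $u_0$ adds the same quantity to $u^\eps$ by the $\Z$-periodicity of $g$, and $\eps[c/\eps]\to c$. Hence Proposition \ref{pro3} gives
\[
w(x,t)=\min\left[(M-\eta)-L|x-x_\eta|+t\int_0^1 g(s)ds,\ (M-\eta)+t\left(\int_0^1\frac1{g(s)}ds\right)^{-1}\right].
\]

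To conclude, recall that $\int_0^1 g(s)ds\ge\left(\int_0^1\frac1{g(s)}ds\right)^{-1}$, with strict inequality when $g$ is nonconstant; the constant case is trivial since then both rates coincide and $\bar c_-$ is constant. For every compact $K\subset\R^n$ there is thus $T=T(K,\eta)$ such that for $x\in K$ and $t\ge T$ the first argument of the minimum exceeds the second, whence $w(x,t)=(M-\eta)+t\left(\int_0^1\frac1{g(s)}ds\right)^{-1}$. Combining $w\le u$ with the upper bound gives, for $x\in K$ and $t\ge T$,
\[
-\eta\le u(x,t)-M-t\left(\int_0^1\frac1{g(s)}ds\right)^{-1}\le 0,
\]
and letting $\eta\to 0$ yields the locally uniform limit. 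The only genuinely delicate points are the invariance of the limit under additive constants, which must be argued through the $\eps\Z$-periodicity of $g$ rather than assumed, and the fact that the switch-time $T$ at which the cone term overtakes the flat term can be chosen uniformly for $x$ in a compact set; everything else reduces to Corollary \ref{corollario0}, Proposition \ref{pro3}, and the comparison principle.
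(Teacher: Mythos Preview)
Your proof is correct and follows essentially the same approach as the paper: both obtain the upper bound from Corollary \ref{corollario0}, build a cone barrier below $u_0$ using the Lipschitz constant and a near-maximizer of $u_0$, apply Proposition \ref{pro3} (together with the comparison principle) to identify the limit of the barrier, and then observe that on compact sets the flat term in the minimum eventually dominates. Your treatment is in fact slightly more careful in justifying the invariance of the limit under additive constants via $\eps[c/\eps]\to c$ and in disposing of the degenerate constant-$g$ case.
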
 

\begin{proof}
Let $C=\|\nabla u_0\|_\infty$. Fix $\delta>0$ and choose $x_\delta\in\R^n$ such that $u_0(x_\delta)\geq \sup_{\R^n} u_0-\delta$. Up to a translation, we can assume $x_\delta=0$. Then $u_0(x)\geq \sup u_0-\delta-C|x|$, therefore by Proposition \ref{pro3} and by 
the comparison principle we get that \[u(x,t)\geq  
\min\left[ \sup_{\R^n} u_0-\delta-C|x|+t\int_0^1 g(s)ds, \ \sup_{\R^n} u_0-\delta+t\left(\int_0^1 \frac{1}{g(s)}ds\right)^{-1}\right].
\]
From this we deduce that for every compact set $K\subset\R^n$, there exists $t_K$ such that  
\begin{equation}\label{pro33} u(x,t)\geq   \sup_{\R^n} u_0-\delta+t\left(\int_0^1 \frac{1}{g(s)}ds\right)^{-1} \qquad \forall x\in K, \ t\geq t_K.
\end{equation}
Recall that by Corollary \ref{corollario0}, we have that $u(x,t)\leq \sup_{\R^n}u_0+t\left(\int_0^1 \frac{1}{g(s)}ds\right)^{-1}$. So, we conclude by \eqref{pro33} and 
the arbitrariness of $\delta>0$.
\end{proof}

\subsubsection{The one-dimensional case} 
In the one-dimensional case $n=1$, we provide a complete convergence result. 

%Given a function $u:\R\to \R$, we say that $x_0$ is a {\em semi-strict} maximum if 
%there exists $a,b\in R$ with $a<x_0<b$ such $that u(x_0)=\max_{x\in[a,b]}u(x)$
%and $\max(u(a),u(b))\le u(x_0)$. 

We first introduce the following class of initial data, which we will denote by $\mathcal{L}$. 
A Lipschitz function $u_0$ belongs to $\mathcal{L}$ if  there exists
a sequence of points $\{x_i\}_{i\in I}$, with $I\subset\mathbb Z$, such that 
%$x_0,\ldots,x_N\in \R\cup \{\pm\infty\}$,  with $x_0=-\infty$,  $x_N=+\infty$ and 
$x_i< x_{i+1}$ for all $i\in I$, the sequence $x_i$ han no accumulation points in $\R$,
and $u_0$ is monotone on all the segments of the form
$[x_i,x_{i+1}]$, with $u(x_i)\neq u(x_{i+1})$. 
Moreover, if $u_0$ is nonincreasing on $[x_i,x_{i+1}]$, then it is nondecreasing on $[x_{i+1},x_{i+2}]$, and viceversa. If $I$ is bounded from below (resp. from above), we also require that $u_0$ is monotone
on the half-line $(-\infty,\min I ]$ (resp. $[\max I,+\infty)$).

%In particular, $u_0$ is monotone on the half-lines 
%$(-\infty,x_1]$ and $[x_N,+\infty)$, so that it admits the limits 
%$u_0(x_0):=\lim_{x\to -\infty}u_0(x)$ and $u_0(x_N):=\lim_{x\to +\infty}u_0(x)$.

%Notice that the class $\mathcal{L}$ is dense in the Lipschitz functions, with respect to the uniform convergence.

\smallskip

We start by proving the whole convergence for solutions $u^\eps_\alpha$ to \eqref{eq} with initial data belonging to $\mathcal{L}$.  Given $u_0\in\mathcal L$, we let $M\subset\R\cup \{\pm \infty\}$ be the set of points $x_i$ such that $u_0(x_i)>\max(u_0(x_{i-1)},u_0(x_{i+1}))$. 
If $I$ is bounded from below (resp. from above) and $u_0$ is decreasing on $(-\infty,\min I]$ 
(resp. increasing on $[\max I,+\infty)$), 
we also add $-\infty$ (resp. $+\infty$) to $M$, and we set 
$u_0(-\infty):=\sup_{(-\infty,\min I)}u_0$ (resp. $u_0(+\infty):=\sup_{(\max I,+\infty)}u_0$).

Setting for simplicity $x_{\min I-1}:=-\infty$ and $x_{\max I+1}:=+\infty$,
for $x\in M$ we let 
\[
I_x:= \begin{cases}
[x_{i-1},x_{i+1}] &\text{  if $x=x_i$ for some $i\in I$,}\\ 
(-\infty,x_{\min I}]&\text{  if $x=-\infty$,}\\ 
[x_{\max I},+\infty)&\text{  if $x=+\infty$.}
\end{cases}\]

For $x\in M$ we also let
\[
T_x:= \begin{cases}
\dfrac{u_0(x_i)-\max(u_0(x_{i-1)},u_0(x_{i+1}))}{\int_0^1 g(s)ds - \left(\int_0^1 \frac{1}{g(s)}ds\right)^{-1}}
&\text{  if $x=x_i$ for some $i\in I$,}\\ \\
\dfrac{u_0(-\infty)-u_0(x_{\min I})}{\int_0^1 g(s)ds - \left(\int_0^1 \frac{1}{g(s)}ds\right)^{-1}}
&\text{  if $x=-\infty$,}\\  \\
\dfrac{u_0(+\infty)-u_0(x_{\max I})}{\int_0^1 g(s)ds - \left(\int_0^1 \frac{1}{g(s)}ds\right)^{-1}}
&\text{  if $x=+\infty$.}
\end{cases}
\]
Notice that $T_x>0$ for all $x\in M$.

\begin{proposition}\label{ledzeppelin}
Let $n=1$ and let $u_0\in\mathcal{L}$.
Then $u^\eps_\alpha$ converges locally uniformly  to a function $u$ satisfying
\begin{equation}\label{heaven}
u(x,t)  =
\min\left[ u_{0}(x)+t\int_0^1 g(s)ds, \  u_0(\bar x)+ t\left(\int_0^1 \frac{1}{g(s)}ds\right)^{-1}\right]
\end{equation}
for $(x,t)\in I_{\bar x}\times [0,T_{\bar x}]$ and for all $\bar x\in M$.
\end{proposition}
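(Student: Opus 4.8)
The plan is to prove, on $I_{\bar x}\times[0,T_{\bar x}]$, the two matching inequalities whose minimum is the asserted formula \eqref{heaven}. Writing $\bar c(0)=\big(\int_0^1 g(s)^{-1}ds\big)^{-1}$, Corollary \ref{corollario0} already supplies the bound $u(x,t)\le u_0(x)+t\int_0^1 g(s)\,ds$ on all of $\R\times[0,+\infty)$, so the statement reduces to (a) a matching lower bound, which I will in fact obtain for every $t\ge 0$, and (b) the \emph{localized} upper bound $u(x,t)\le u_0(\bar x)+t\bar c(0)$ on $I_{\bar x}\times[0,T_{\bar x}]$. The key difficulty is that the global estimate of Corollary \ref{corollario0} only gives $u\le \sup_{\R}u_0+t\bar c(0)$, which is useless at a local maximum $\bar x$ that is not global; recovering the \emph{local} value $u_0(\bar x)$ is the heart of the matter.

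For the lower bound I would localize from below. With $\bar x=x_i$ and $I_{\bar x}=[x_{i-1},x_{i+1}]$, the datum $u_0$ is nondecreasing on $[x_{i-1},x_i]$ and nonincreasing on $[x_i,x_{i+1}]$, so I extend $u_0|_{I_{\bar x}}$ to a globally single-peaked Lipschitz function $\underline u_0\le u_0$ by setting it equal to the running minimum of $u_0$ outside $I_{\bar x}$ (so that $\underline u_0$ is nondecreasing on $(-\infty,x_i]$, nonincreasing on $[x_i,+\infty)$, with $\underline u_0=u_0$ on $I_{\bar x}$ and $\max\underline u_0=u_0(\bar x)$). By Remark \ref{remarkdati} (the one-dimensional single-peak case of Proposition \ref{pro3}), the solutions of \eqref{eq} with initial datum $\underline u_0$ converge to $\underline u(x,t)=\min\big[\underline u_0(x)+t\int_0^1 g(s)\,ds,\ u_0(\bar x)+t\bar c(0)\big]$. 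Since $\underline u_0\le u_0$, the comparison principle for \eqref{eq} gives $\underline u\le u$ in the limit, and restricting to $I_{\bar x}$, where $\underline u_0=u_0$, yields the desired lower bound for all $t\ge 0$.

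For the upper bound I would run a bounded-domain comparison, which is the hard step precisely because no global single-peaked majorant with peak value $u_0(\bar x)$ can exist when $\bar x$ is not a global maximum. Fix $\delta>0$ small and work on the cylinder $Q=I_{\bar x}\times(0,T_{\bar x}-\delta)$, with the spatially constant supersolution $V^\eps(x,t)=\eps\chi_0\big(t\bar c(0)/\eps\big)+\eps[u_0(\bar x)/\eps]$ of Proposition \ref{corollario2}, which converges uniformly to $u_0(\bar x)+t\bar c(0)$. On $\{t=0\}$ one has $V^\eps(x,0)\ge u_0(\bar x)\ge u_0(x)$ for $x\in I_{\bar x}$, using $u_0(\bar x)=\max_{I_{\bar x}}u_0$. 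On the lateral boundary $x\in\{x_{i-1},x_{i+1}\}$ the very definition of $T_{\bar x}$ gives, for $t\le T_{\bar x}-\delta$, the strict inequality $u_0(\bar x)+t\bar c(0)\ge u_0(x_{i\pm1})+t\int_0^1 g(s)\,ds+c\,\delta$ for some $c>0$; combined with $u^\eps_\alpha(x_{i\pm1},t)\to u(x_{i\pm1},t)\le u_0(x_{i\pm1})+t\int_0^1 g(s)\,ds$ (Corollary \ref{corollario0}) and the uniform convergence of $V^\eps$, this yields $V^\eps\ge u^\eps_\alpha$ on the whole parabolic boundary of $Q$ for $\eps$ small. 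The comparison principle on $Q$ then gives $u^\eps_\alpha\le V^\eps$ in $Q$; letting first $\eps\to0$ and then $\delta\to0$ produces $u(x,t)\le u_0(\bar x)+t\bar c(0)$ on $I_{\bar x}\times[0,T_{\bar x}]$.

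Combining (a), (b) and the global bound $u\le u_0(x)+t\int_0^1 g(s)\,ds$ establishes \eqref{heaven} for every subsequential limit; since the right-hand side does not depend on the subsequence, the whole family $u^\eps_\alpha$ converges locally uniformly on $\bigcup_{\bar x\in M}I_{\bar x}\times[0,T_{\bar x}]$. The cases $\bar x=\pm\infty$ are handled identically, replacing $I_{\bar x}$ by the appropriate half-line and $u_0(\bar x)$ by $u_0(\pm\infty)$, with the lateral condition now verified only at the finite endpoint. As indicated, the main obstacle is the localized upper bound: because $\bar x$ need not be a global maximum the global estimate of Corollary \ref{corollario0} does not suffice, and one is forced onto the bounded cylinder, where the exact role of $T_{\bar x}$ is to guarantee that the constant plateau barrier $u_0(\bar x)+t\bar c(0)$ dominates the faster solution $u_0(x)+t\int_0^1 g(s)\,ds$ on the lateral boundary.
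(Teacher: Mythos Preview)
Your proof is correct and follows the same two-sided comparison strategy as the paper: a lower bound obtained by comparing with the solutions starting from single-peaked minorants (to which Remark~\ref{remarkdati} applies), and the localized upper bound $u\le u_0(\bar x)+t\bar c(0)$ on $I_{\bar x}\times[0,T_{\bar x}]$ obtained by a comparison argument exploiting the definition of $T_{\bar x}$. The only cosmetic difference is the choice of minorant: the paper extends $u_0|_{I_{\bar x}}$ linearly with slope $\pm C$, whereas you use the running-minimum extension; both yield Lipschitz single-peaked functions $\le u_0$ coinciding with $u_0$ on $I_{\bar x}$, so Remark~\ref{remarkdati} applies equally. For the upper bound the paper writes only ``by comparison principle'', and your bounded-cylinder argument with the barrier $V^\eps$ from Proposition~\ref{corollario2}---checking the parabolic boundary via Corollary~\ref{corollario0} on the lateral faces---is precisely the intended implementation, spelled out in more detail.
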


\begin{proof}
Let $C$ be the Lipschitz constant of  $u_0$. For $\bar x\in M$ we let 
$u_{0,\bar x}$ be defined as  
\begin{eqnarray*}  
u_{0,\bar x}(x):=&\begin{cases}  u_0(x_{i-1})-Cx_{i-1}+Cx & \text { for $x\le x_{i-1}$}\\  u_0(x)&\text{  for $x\in [x_{i-1},x_{i+1}]$}\\  
u_0(x_{i+1})+Cx_{i+1}-Cx&\text{ for $x\ge x_{i+1}$}\end{cases}& \text{ if $\bar x= x_i$ for some $i$,}\\
u_{0,\bar x}(x):=& \begin{cases}    u_0(x)&\text{  for $x\in (-\infty, x_{\min I}]$}\\  
u_0(x_{\min I})+Cx_{\min I}-Cx&\text{ for $x\ge x_{\min I}$}\end{cases} 
&\text{ if $\bar x=-\infty$,}\\ 
u_{0,\bar x}(x):=&\begin{cases}  u_0(x_{\max I})-Cx_{\max I}+Cx & \text { for $x\le x_{\max I}$}\\  
u_0(x)&\text{  for $x\in [x_{\max I},+\infty)$}\end{cases}&\text{ if $\bar x=+\infty.$}
\end{eqnarray*} 

By construction,  $u_{0,\bar x}\leq u_0$ for every $\bar x\in M$, and  $u_0(x)= \sup_{\bar x\in M} u_{0,\bar x}(x)$.
Then by comparison we get that $u^\eps_\alpha\geq u^\eps_{\alpha, \bar x}$, where 
$u^\eps_{\alpha,\bar x}$ is the solution to
\eqref{eq} with initial datum $u_{0,\bar x}$. 
%If either $x_i\in (-\infty, +\infty)$ or $x_i=\pm \infty$ and  $u_0(x_i)<+\infty$,   
By Proposition \ref{pro3} and Remark \ref{remarkdati} we know that 
\[
\lim_{\eps\to 0} u^\eps_{\alpha,\bar x}(x,t)= \min\left[ u_{0,\bar x}(x)+t\int_0^1 g(s)ds, \ u_0(\bar x)+ t\left(\int_0^1 \frac{1}{g(s)}ds\right)^{-1}\right]\qquad \text{ locally uniformly.}
\]
%On the other hand, if $x_i=\pm \infty$ and  $u_0(x_i)=+\infty$,  then Corollary \ref{corollario1} implies that
% \[
%\lim_{\eps\to 0} u^\eps_{\alpha,i}(x,t)=  u_{0,i}(x)+t\int_0^1 g(s)ds \qquad \text{ locally uniformly.}
% \]  
Therefore, letting $u$ be a limit of $u^\eps_\alpha$, we conclude that  
\begin{equation}\label{maxmin} 
u(x,t)\geq \max_{\bar x\in M} \, u^\eps_{\alpha,\bar x} \qquad\forall (x,t)\in \R\times [0,+\infty).
\end{equation} 

On the other hand, reasoning as  in Corollary \ref{corollario0},
for all $\bar x\in M$ we also get
\begin{equation}\label{minmax} 
u(x,t)\leq \min\left[  u_{0,\bar x} (x)+t\int_0^1 g(s)ds, \,  u_0(\bar x)+ t\left(\int_0^1 \frac{1}{g(s)}ds\right)^{-1}\right] 
\end{equation}
for all $(x,t)\in I_{\bar x}\times [0,T_{\bar x}]$, which gives \eqref{heaven}.

\smallskip

Indeed, by Corollary \ref{corollario0} we know that 
\begin{equation}\label{minmoche}
u(x,t)\le u_0(x)+t \int_0^1 g(s)ds
= u_{0,\bar x}(x)+t \int_0^1 g(s)ds
\end{equation}
for all $(x,t)\in I_{\bar x}\times [0,T_{\bar x}]$.
%As a consequence,  by comparison principle we also have that
%\[
%\max_{I_{\bar x}} u(\cdot,t)<u_0(\bar x)+t\left(\int_0^1 \frac{1}{g(s)}ds\right)^{-1}
%\qquad \forall (x,t)\in I_{\bar x}\times [0,T_{\bar x}).
%\]
By comparison principle, it follows that 
\begin{equation}\label{minmache}
u(x,t)\le u_0(\bar x)+t\left(\int_0^1 \frac{1}{g(s)}ds\right)^{-1}
\qquad \forall (x,t)\in I_{\bar x}\times [0,T_{\bar x}].
\end{equation} 
Inequality \eqref{minmax} then follows from \eqref{minmoche} and \eqref{minmache},
and the proof is concluded.
\end{proof}

Given $\delta>0$, we denote by $\mathcal L_\delta\subset\mathcal L$
the class of functions $u_0\in\mathcal L$ such that 
\begin{itemize}
\item[i)] $T_x\ge \delta$ for any $x\in M$;
\item[ii)] for any $x,y\in M$ either $T_x=T_y$ or $|T_x-T_y|\ge \delta$.
\end{itemize}
Notice that, for $u_0\in\mathcal L_\delta$, there exists an increasing sequence of 
times $T_i$, with $T_0=0$, such that $T_{i+1}\ge T_i + \delta$ and for any $x\in M$
there exist an index $i(x)$ such that $T_x=T_{i(x)}$.

\smallskip 

Proposition \ref{ledzeppelin} enables us to obtain the uniqueness of the limit function $u$
for initial data in $\mathcal L_\delta$.

\begin{corollary}\label{corldelta}
Let $n=1$ and let $u_0\in\mathcal{L}_\delta$ for some $\delta>0$.
Then $u^\eps_\alpha$ converges locally uniformly in $\R\times [0,+\infty)$ to a unique function $u$.
\end{corollary}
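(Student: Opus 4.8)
The plan is to upgrade the local description of Proposition \ref{ledzeppelin} to a global one by propagating it across the successive collision times. Conditions (i)--(ii) guarantee that the distinct values of $\{T_x\}_{x\in M}$ form a strictly increasing sequence $0=T_0<T_1<T_2<\cdots$ with $T_{i+1}\ge T_i+\delta$, whence $T_i\ge i\delta\to+\infty$. Thus any finite horizon $\bar t$ is surpassed after finitely many $T_i$, and it suffices to determine $u$ on one slab $\R\times[T_i,T_{i+1}]$ at a time and then restart.

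\noindent\textbf{Base slab and restarting.}
On $[0,T_1]$ the statement follows at once: the intervals $I_{\bar x}$ cover $\R$ (two consecutive maxima share the minimum lying between them) and $T_{\bar x}\ge T_1$ for every $\bar x\in M$, so the cylinders $I_{\bar x}\times[0,T_{\bar x}]$ cover the slab $\R\times[0,T_1]$ and \eqref{heaven} pins down $u$ there; since every subsequential limit produced by Proposition \ref{seq} must obey \eqref{heaven}, they all coincide on this slab, the full family $u^\eps_\alpha$ converges, and in particular $u^\eps_\alpha(\cdot,T_1)\to u(\cdot,T_1)$. For the inductive step I would use the comparison principle for \eqref{eq}: assuming $u$ determined (with full convergence) on $\R\times[0,T_i]$, the shifted solutions $v^\eps(x,t):=u^\eps_\alpha(x,t+T_i)$ solve \eqref{eq} with datum $u^\eps_\alpha(\cdot,T_i)\to u(\cdot,T_i)$ uniformly, so by comparison their limit coincides with that of the solutions issued from $u(\cdot,T_i)$; one then reapplies Proposition \ref{ledzeppelin} to $u(\cdot,T_i)$, with the data $M,I_{\bar x},T_{\bar x}$ recomputed.

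\noindent\textbf{Main obstacle: invariance of the collision schedule.}
The heart of the proof is the structural analysis of $u(\cdot,T_i)$ via \eqref{heaven}. Each maximum $\bar x$ with $T_{\bar x}=T_i$ is reached from below exactly at $T_i$: its higher adjacent minimum $m$, rising at the fast speed $\int_0^1 g(s)\,ds$, meets the slowly rising crest $u_0(\bar x)+T_i\,\bar c(0)$, so the crest flattens into a horizontal shelf and disappears. The delicate point is that this must \emph{not} perturb the collision times of the neighbouring surviving maxima. Indeed, a surviving maximum $x_k$ on the shelf side satisfies $u_0(x_k)>u_0(\bar x)$ (otherwise $m$ would already have engulfed $x_k$ before $T_i$, contradicting $T_{x_k}>T_i$), so $x_k$ sits above the shelf; equating its crest $u_0(x_k)+t\,\bar c(0)$ with the shelf level $u_0(\bar x)+T_i\,\bar c(0)+(t-T_i)\int_0^1 g(s)\,ds$ shows that the shelf catches $x_k$ at the absolute time $(u_0(x_k)-u_0(m))/(\int_0^1 g(s)\,ds-\bar c(0))$, which is exactly the instant at which the now-swallowed minimum $m$ would itself have reached $x_k$. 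Thus the shelf substitutes perfectly for $m$, and every surviving maximum retains its original collision time $T_{x_k}$.

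\noindent\textbf{Conclusion of the plan.}
Granting this invariance, the collisions occur precisely at the original, $\delta$-separated times $T_1<T_2<\cdots$; at each $T_i$ the number of maxima strictly decreases, since the shelves created are non-extremal (higher on one side, lower on the other) and hence produce no new maximum, and the profile $u(\cdot,T_i)$ is again piecewise monotone up to such flat shelves, so that a routine extension of Proposition \ref{ledzeppelin} admitting flat parts applies on the next slab. Because $T_i\to+\infty$, finitely many slabs exhaust any interval $[0,\bar t]$, and the whole sequence $u^\eps_\alpha$ converges to a single limit on $\R\times[0,+\infty)$. The technical core, which I expect to be by far the most laborious part, is exactly the bookkeeping behind this invariance: verifying the height inequality $u_0(x_k)>u_0(\bar x)$, the shelf-substitution identity, and the persistence of the $\mathcal{L}$-type structure through each collision, including the symmetric configurations and the case of several maxima colliding simultaneously at the same $T_i$.
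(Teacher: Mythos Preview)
Your approach is essentially the same as the paper's: both iterate Proposition~\ref{ledzeppelin} on successive time slabs, the paper compressing the entire structural analysis into the single assertion that $u(\cdot,T_1)$ again belongs to $\mathcal L_\delta$ with a (possibly) smaller set $M$, and then reapplying the proposition on each $\R\times[T_i,T_{i+1}]$. Your shelf-substitution computation and the height inequality $u_0(x_k)>u_0(\bar x)$ are exactly the content that justifies this restart, so you have supplied the bookkeeping the paper leaves implicit.
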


\begin{proof}
%Let $T=\min_{x\in M} T_{i(x)}\ge \delta$. 
%By definition of $\mathcal{L}_\delta$, $T\geq \delta$. 
For $(x,t)\in \R\times [0,T_1]$ the result follows directly from Propositions \ref{seq} and \ref{ledzeppelin}. 
 Then, it is enough to observe that the function $u(x,T_1)$, given by the right-hand side
in \eqref{heaven}, still belongs to $\mathcal L_\delta$, possibly with a smaller set $M$.
Hence we can iteratively apply  Proposition \ref{ledzeppelin} on all the 
sets of the form $\R\times [T_i,T_{i+1}]$, and obtain the thesis.
\end{proof}

We now show an analogous result for general Lipschitz continuous initial data.

\begin{theorem}\label{teounicita} 
Let $n=1$,  let $u_0$ be a Lipschitz function,
and let $u^\eps_\alpha$ be the solution to \eqref{eq}. 
Then $u^\eps_\alpha$ converges locally uniformly to a unique function $u$.
\end{theorem}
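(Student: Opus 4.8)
Since this is the regime $0<\alpha<1$ in dimension $n=1$, the result amounts to removing, from Corollary \ref{corldelta}, the structural restriction $u_0\in\mathcal L_\delta$. The plan is to combine an $L^\infty$--stability estimate for \eqref{eq} with a density argument: approximate an arbitrary Lipschitz datum uniformly by data in $\mathcal L_\delta$, for which the limit is already unique, and transfer uniqueness by stability. First I would prove the stability estimate. If $u_0,v_0$ are Lipschitz with $\|u_0-v_0\|_\infty\le\eta$, then $u_0\le v_0+\eta\le v_0+[\eta/\eps]\eps$; since $g$ is $\Z$--periodic and $[\eta/\eps]\eps\in\eps\Z$, the function $v^\eps_\alpha+[\eta/\eps]\eps$ is again a solution of \eqref{eq} (its initial datum being $v_0+[\eta/\eps]\eps$), exactly as in the proof of Proposition \ref{seq}. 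The comparison principle then yields
\[
u^\eps_\alpha\le v^\eps_\alpha+[\eta/\eps]\eps\le v^\eps_\alpha+\eta+\eps,
\]
and symmetrically the reverse bound, so that $\|u^\eps_\alpha-v^\eps_\alpha\|_\infty\le\eta+\eps$ for every $\eps>0$. Consequently, whenever both families converge along a common subsequence, their limits $u$ (for $u_0$) and $v$ (for $v_0$) satisfy $\|u-v\|_\infty\le\eta$ on $\R\times[0,+\infty)$.

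\textbf{Density.} Next I would show that every Lipschitz $u_0$ is a uniform limit of functions in $\mathcal L_\delta$. Given $\eta>0$, I would first replace $u_0$ by a piecewise linear ``zigzag'' $v_0$ on a fine lattice mesh, with nowhere-vanishing, strictly alternating slopes, so that $v_0\in\mathcal L$, $\|u_0-v_0\|_\infty\le\eta$, the breakpoints have no accumulation point, and the Lipschitz constant stays controlled (amplitude $\sim\eta$ and mesh $\sim\eta/L$ keep all slopes of order $L$). To land in the subclass $\mathcal L_\delta$ I would then quantize the heights of $v_0$ at its breakpoints onto a grid $\mu\Z$ with $\mu\sim\eta$, taking the zigzag amplitude larger than $\mu$ so that rounding preserves the strict alternation. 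Because each $T_x$ equals, up to the fixed positive factor $\int_0^1 g(s)\,ds-\left(\int_0^1 g(s)^{-1}\,ds\right)^{-1}$, a difference of two quantized heights, all switching times then lie in a discrete set; this forces $T_x\ge\delta$ and $|T_x-T_y|\ge\delta$ whenever $T_x\neq T_y$, i.e.\ conditions (i)--(ii) defining $\mathcal L_\delta$, with $\delta\sim\mu$.

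\textbf{Conclusion.} Finally I would assemble the two ingredients. Let $u_1,u_2$ be any two subsequential limits of $u^\eps_\alpha$ (which exist by Proposition \ref{seq}), and fix $\eta>0$; choose $v_0\in\mathcal L_\delta$ with $\|u_0-v_0\|_\infty\le\eta$. By Corollary \ref{corldelta} the solutions with datum $v_0$ admit a \emph{unique} limit $v$. Hence, along a subsequence on which $u^\eps_\alpha\to u_i$, one may extract a further subsequence on which the $v_0$--solutions converge, necessarily to $v$; the stability estimate gives $\|u_i-v\|_\infty\le\eta$ for $i=1,2$, whence $\|u_1-u_2\|_\infty\le2\eta$. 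Since $\eta$ is arbitrary, $u_1=u_2$, so the whole family $u^\eps_\alpha$ converges locally uniformly to a single limit. I expect the stability estimate to be immediate from comparison and periodicity; the genuine obstacle is the density step, namely producing an approximant that is simultaneously in $\mathcal L$ (alternating, strictly monotone pieces with discrete breakpoints) and in $\mathcal L_\delta$ (the quantization enforcing the separation of the $T_x$), while keeping both $\|u_0-v_0\|_\infty$ and the Lipschitz constant small.
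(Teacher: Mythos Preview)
Your proposal is correct and follows essentially the same approach as the paper: approximate $u_0$ uniformly by data in $\mathcal L_\delta$, invoke Corollary~\ref{corldelta} for those approximants, and transfer uniqueness via an $L^\infty$--stability bound coming from comparison and the $\Z$--periodicity of $g$. The only differences are cosmetic: the paper builds the approximant in one step by taking a zigzag with slopes exactly $\pm C$ on a mesh $\delta/C$ (so node values automatically land on a grid of mesh $\delta$, making your separate quantization step unnecessary), and it replaces your abstract stability lemma by sandwiching $u_0$ between $u_\delta\pm 2\delta$ and reading off from the explicit formula~\eqref{heaven} that the two sandwich limits differ by exactly $4\delta$.
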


\begin{proof} 
By Corollary \ref{corldelta} the result is true if $u_0\in \mathcal{L}_\delta$ for some $\delta>0$. 

Fix now a Lipschitz function  $u_0$, with Lipschitz constant $C>0$.
We observe that for any  $\delta>0$  it is possible to construct a function $u_\delta$, with Lipschitz constant $C$, such that $\|u_0-u_\delta\|_\infty\leq 2\delta$ and
$u_\delta\in \mathcal{L}_{\delta'}$ 
with $\delta'=\delta/(\int_0^1 g(s)ds - (\int_0^1 1/g(s)ds)^{-1})$. 
Indeed, fix $\delta>0$ and 
let $x_n= n \delta/C$, for $n\in \mathbb Z$. We then define $u_\delta$ as a piecewise linear function with slope $\pm C$ 
on each interval of the form $[x_{n-1}, x_{n}]$, satisfying $u_0(x_n)-\delta\leq u_\delta(x_n)\leq u_0(x_n)+\delta$ for every $n\in\mathbb Z$. This in turn implies that $u_\delta\in \mathcal{L}_{\delta'}$,
and $u_\delta(x)-2\delta\leq u_0(x)\leq u_\delta(x)+2\delta$ for every $x\in\R$.

Let now $\bar{u}^\eps_{ \alpha, \delta}$, $\underline{u}^\eps_{ \alpha, \delta}$ and $u^\eps_\alpha$ be the solutions to \eqref{eq} with initial data  
$u_\delta+2\delta$, $u_\delta-2\delta$ and $u_0$ respectively. By comparison principle we get that 
\[\underline{u}^\eps_{ \alpha, \delta}(x,t)\leq u^\eps_{ \alpha}(x,t)\leq \bar{u}^\eps_{ \alpha, \delta}(x,t)\qquad \forall (x,t)\in\R\times [0,+\infty).\]
By Corollary \ref{corldelta}, we knnow that there exist two functions $\underline{u}_\delta$ and $\bar u_\delta$ such that 
$$
\lim_{\eps\to 0} \underline{u}^\eps_{ \alpha, \delta}(x,t)=\underline{u}_\delta(x,t)\qquad
\text{and} \qquad
\lim_{\eps\to 0}\bar{u}^\eps_{ \alpha, \delta}(x,t)=\bar u_\delta(x,t)
\qquad \text{locally uniformly.}$$ 
 Moreover, by the explicit formula \eqref{heaven} we have that 
 $\bar u_\delta=\underline{u}_\delta+4\delta$.
 %  for uniform limits
%of functions in the class $\mathcal{L}$, we get that $\|\bar u_\delta-\underline{u}_\delta\|_\infty\leq  2\delta$. 
Hence,  if $u$ is a locally uniform limit of $u^\eps_\alpha$
given by Proposition \ref{seq}, 
then it   satisfies \[\underline u_\delta(x,t)\leq u(x,t) \leq \bar{u}_\delta(x,t) = \underline u_\delta(x,t)+4\delta
\qquad \text{for every $(x,t)$ and every $\delta>0$. }\]
Letting $\delta\to 0$,
this implies that, if $u_1$ and $u_2$ are uniform limits of $u^\eps_\alpha$, then  $u_1= u_2$. 
\end{proof} 

\subsection{Case $\alpha=0$} 
The case $\alpha=0$ is completely analogous to the case $\alpha\in (0,1)$, the only difference is that in this case the limit problem \eqref{limit0} is of second order. 
The differential operator $\bar F$ appearing in the limit problem is defined as follows: \[  \bar F(p, X)= \begin{cases}  \tr X+\int_0^1 g(s)ds & p\neq 0\\  
\min\left(\tr X+\int_0^1 g(s)ds, \left(\int_0^1  g(s)^{-1}ds\right)^{-1}\right) & p=0.\end{cases}\] In particular $\bar F^\ast(p, X)= \tr X+\int_0^1 g(s)ds $ 
and  $\bar F_\ast(p, X)=\bar F(p, X)$ for every $(p, X)$.

According to the definition recalled in Section \ref{notation}, $u$ is a viscosity solution of the limit problem \eqref{limit0} if the following holds: 
if $\phi$ is  a smooth test function such that  $u(x_0,t_0)=\phi(x_0, t_0)$ and $u\leq \phi$, then  \[\phi_t(x_0, t_0)-\Delta \phi(x_0, t_0)\leq \int_0^1 g(s) ds.\] 
If, on the other hand, $u\geq \phi$, then    \[\phi_t(x_0, t_0)-\Delta \phi(x_0, t_0) \geq  \int_0^1 g(s) ds\qquad \text{if }\nabla \phi(x_0,t_0)\neq 0\] and  
\[\phi_t(x_0, t_0)\geq\min\left(\Delta \phi(x_0, t_0)+\int_0^1 g(s)ds,  \left( \int_0^1 g^{-1}(s) ds\right)^{-1}\right)\qquad \text{if }\nabla \phi(x_0,t_0)= 0.\]   

\begin{theorem} \label{conv4} Let $\ue_\alpha$ be  the  solution   to \eqref{eq} with $\alpha=0$ and with  $u_0\in C^{1,1}$. 
Every locally uniformly limit $u$ of $u^\eps_\alpha$ is a Lipschitz continuous function which satisfies  \eqref{estlip2} and solves in the viscosity sense the 
Cauchy problem \[\begin{cases} u_t-\bar F(\nabla u, \nabla^2 u)=0&\text{ in }\R^n\times (0, +\infty)\\ u(x,0)=u_0(x)& \text{ in }\R^n.\end{cases}\]
Moreover 
\begin{itemize} 
\item[i)]  $u$ is a viscosity solution to \[u_t-\Delta  u= \int_0^1g(s)ds\]  in every open set $\Omega\subset \R^n\times [0,+\infty)$, such that $\nabla u \neq 0$ a.e. in  $\Omega$.  
%In particular $u$ solves  $\Omega\subseteq \overline{\mathcal{A}}:=\overline{\{(x,t)\ | \nabla u(x,t)\neq 0\}}.$
\item[ii)]  $u$ is a viscosity subsolution to \[u_t = \left(\int_0^1 \frac{1}{g(s)}ds\right)^{-1}\] at every point $(x_0,t_0)$ such that  $(0, X, \lambda)\in D^+ u(x_0, t_0)$ with $X< 0$ in the sense of matrices.
\item [iv)]  $u$ is a viscosity supersolution to \[u_t-\Delta  u= \int_0^1 g(s)ds\] at every point $(x_0,t_0)$ such that  $(0, X, \lambda)\in D^- u(x_0, t_0)$,  and there exist $\eta\in \R^n$, $\delta>0$,  such that $\eta^t X\eta>\delta|\eta|^2>0$.
\item[v)] If  $u_0$ is as in Corollario \ref{corollario1}, then $u$ is the solution to 
\begin{equation}\label{heat} \begin{cases}u_t-\Delta u=\int_0^1 g(s)ds \\
u(x,0)=u_0(x).\end{cases}\end{equation}
%\item[vi)] If $u_0$ is as in Corollario \ref{corollario3}, then  $u$ satisfies 
%\[u(x,t)   \leq \min\left[ u_0(x,t), \ \max u_0 + t\left(\int_0^1 \frac{1}{g(s)}ds\right)^{-1}\right]\]
%where $u_0(x,t)$ is the solution to \eqref{heat}.
\end{itemize} 
\end{theorem}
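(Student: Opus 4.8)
The plan is to follow the scheme of Theorem \ref{conv3} almost verbatim, the one—but crucial—difference being that now the diffusion coefficient $\eps^\alpha=1$ does not vanish, so in the perturbed test function computation the term coming from $\eps^\alpha\chi'\Delta\phi$ survives in the limit and produces exactly the $\tr X$ entry of $\bar F$. First I would invoke Proposition \ref{seq} to obtain, up to a subsequence, a locally uniform limit $u$ which is Lipschitz and satisfies \eqref{estlip2}; it then remains to prove that any such $u$ is a viscosity solution of $u_t-\bar F(\nabla u,\nabla^2 u)=0$, after which the refinements i)--v) are extracted.

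For the solution property I would reuse the perturbed test function $\phi^\eps(x,t)=\eps\chi_{p^\eps}(\phi(x,t)/\eps+s^\eps)$, with $s^\eps$ fixed by the continuity/matching argument of Theorem \ref{conv1}. By Proposition \ref{cellalpha} the corrector for $\alpha=0$ is the $\alpha=1$ corrector of slope $p\eps^{-1/2}$. Plugging $\phi^\eps$ into \eqref{eq}, using \eqref{cell} to cancel the $\eps^{-1}\chi''|p|^2$ term and estimating the remainder $\eps^{-1}\chi''(|\nabla\phi(x^\eps)|^2-|p|^2)$ via \eqref{stima2} (note that $\eps^{-1}\|\chi''_{p^\eps}\|_\infty$ stays bounded for $p\neq0$), one is left in the limit with $\chi'$ multiplying $\phi_t-\bar c(|p^\eps|)-\Delta\phi$. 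This immediately gives the \emph{sub}solution inequality $\phi_t\le\Delta\phi+\int_0^1 g(s)ds=\bar F^\star(\nabla\phi,\nabla^2\phi)$ at every point (since $\bar c\le\int_0^1 g$), and the \emph{super}solution inequality $\phi_t-\Delta\phi\ge\int_0^1 g(s)ds$ whenever $\nabla\phi(x_0,t_0)=p\neq0$, because then $|p^\eps|=|p|\eps^{-1/2}\to+\infty$, so $\bar c(|p^\eps|)\to\int_0^1 g$ and $\chi'_{p^\eps}\to1$ by \eqref{limitcorrectorinfty}.

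The delicate point—and the step I expect to be the main obstacle—is the supersolution inequality at a degenerate point, where $\nabla\phi(x_0,t_0)=0$ and one must produce $\phi_t\ge\min(\Delta\phi+\int_0^1 g,\ \bar c(0))=\bar F_\star(0,\nabla^2\phi)$. Here the surviving $\chi'\Delta\phi$ spoils the clean bound $\phi_t\ge\bar c(0)$ that was available for $\alpha\in(0,1)$: using $\chi_0$ only yields $\phi_t\ge\Delta\phi+\bar c(0)$, which is too weak when $\nabla^2\phi\le0$. I would therefore split according to $X=\nabla^2\phi(x_0,t_0)$. If $X$ has a direction $\eta$ of strictly positive curvature I run the horizontal–translation argument of Theorem \ref{conv3}(iii) on $\phi^h(x,t)=\phi(x+h\eta,t)$: at the perturbed minimum points one has $\nabla\phi^h\neq0$, so the case $p\neq0$ applies and gives $\phi^h_t-\Delta\phi^h\ge\int_0^1 g$; letting $h\to0$ (and using $\Delta\phi^h\to\Delta\phi$) yields $\phi_t\ge\Delta\phi+\int_0^1 g\ge\bar F_\star$. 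If instead $X>0$, the quadratic–in–time, spatially constant barrier of Theorem \ref{conv3}(ii), which has zero Laplacian, minorizes $\phi$ near $(x_0,t_0)$ and gives $\phi_t\ge\bar c(0)\ge\bar F_\star$. The genuinely hard remaining case is $X\le0$ with no positive curvature direction, i.e. a local maximum in $x$: there neither tool applies directly, and I expect to close it by a local comparison, on a small parabolic cylinder, between $u^\eps$ and the exact spatially homogeneous solution $\eps\chi_0(\bar c(0)t/\eps+\mathrm{const})$, whose ordering on the lateral and bottom boundary is arranged using the strict concavity of $\phi$; this produces the missing bound $\phi_t\ge\bar c(0)$ and is precisely where the discontinuity of $\bar F$ makes the argument most technical.

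Finally I would derive i)--v). For i), $u$ is globally a subsolution of $u_t-\Delta u=\int_0^1 g$ and, by the case $p\neq0$, a supersolution wherever $\nabla u\neq0$; the mollification argument of Theorem \ref{conv3}(i) then upgrades the a.e. identity $u_t-\Delta u=\int_0^1 g$ to a viscosity identity on $\Omega$. Assertion ii) is the $t$-barrier computation above combined with $-\Delta u^\eps\le0$ at the maximum, and iv) is the shift argument restricted to the positive-curvature direction $\eta$. For v), when $\nabla u_0\cdot\eta\ge\delta$ the monotonicity is preserved by Proposition \ref{seq}, so $\nabla u\neq0$ a.e.; then i) together with uniqueness for the uniformly parabolic semilinear heat equation \eqref{heat} identifies $u$. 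The convex case follows as in Corollary \ref{corollario1}, comparing from below with the explicitly solved affine initial data and from above with the universal subsolution property.
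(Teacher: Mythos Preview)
Your approach is essentially the same as the paper's: the paper's entire proof is the one-line remark that the arguments of Theorem \ref{conv3} and Corollary \ref{corollario1} carry over verbatim, the only new ingredient being that $\chi'_{p^\eps}\to 1$ uniformly as $\eps\to 0$ (which is what makes the surviving $\chi'\Delta\phi$ term converge to $\Delta\phi$), and this is precisely the modification you single out.

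Where you go beyond the paper is in the elaborate case split on the sign of $X$ for the supersolution inequality at $\nabla\phi(x_0,t_0)=0$. The paper simply reruns Case~2 of Theorem \ref{conv3}; your trichotomy is partly redundant (the ``$X>0$'' barrier case is already contained in the ``positive direction'' shift case), and the remaining ``hard'' branch $X\le 0$ is only sketched---the proposed local comparison with $\eps\chi_0(\bar c(0)t/\eps+\text{const})$ would give a \emph{lower} bound on $u$, yet arranging the parabolic-boundary ordering from the \emph{concavity} of a test function that touches $u$ from \emph{below} is not straightforward, and you do not explain how the lateral boundary is controlled. If you want to make this step rigorous you should rather note that for $X\le 0$ one may replace $\phi$ by $\phi-\delta|x-x_0|^2$ to reduce to $X<0$ strictly, and then argue as in the proof of item ii) with the roles of sub/super reversed; but this is extra detail the paper does not supply either.
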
 
\begin{proof} The proofs are  completely analogous to those  of Theorem \ref{conv3} and Corollary \ref{corollario1}.
We just note that  
%to  conclude from  \eqref{sei} 
we need to use the fact that $\chi'_{p^\eps}\to1$ uniformly, as $\eps\to 0$.
  \end{proof}

 \section{Open problems} \label{secopen}
 We list some open problems which could be interesting to investigate in future works.
 \begin{enumerate} 
 \item 
An important question is the complete characterization of the limit function $u$ 
in the case $0\le \alpha<1$, and its uniqueness given the initial datum $u_0$.
At the moment, we are able to show uniqueness only in one-dimension, for $0<\alpha<1$. 
We recall that in general there is no uniqueness of viscosity solutions to 
the equations \eqref{limit01} and \eqref{limit0},
due to the discontinuity of the operators $\bar c_-$ and $\bar F$.
 
\item In this paper we only consider the case $g>0$. 
We expect that the same results are still valid in the case that $\int_0^1 g(s)ds>0$. 
We also expect that the cell problem could still be solved using an ODE argument, possibly more involved.  Note that if $\min g\le 0$, we get  $\bar c(0)=0$.

In the limiting case $\int_0^1 g(s)ds=0$, it is possible to solve the cell problem \eqref{cella}, with  $\bar c(p)\equiv 0$ for every $p$,
which means that the solutions $u^\eps_\alpha$ to \eqref{eq}  converge locally uniformly to the initial datum $u_0(x)$ for every $\alpha>0$, and to the solution of the heat equation with initial datum
$u_0$ for $\alpha=0$. 

\item In \cite{j} the long time rescaling \eqref{jerr} of \eqref{eq} has been considered, 
with $\int_0^1 g(s)ds=0$ and $\alpha\geq 1$.
In particular the author proved that $u^\eps_\alpha$ converge locally uniformly to a solution of a quasilinear parabolic equation, 
which in the case $\alpha>1$ coincides  the level set equation of the  mean curvature flow. 
For $\alpha=1$, in \cite[Thm 1.1]{j} it is proved that there exists a function $\theta:[0, +\infty)\to \R$ with $\theta(0)=1$ and $\theta(s)\in (0,1)$ for every $s>0$ 
such that the solutions to \eqref{jerr} with datum $u^\eps(x,0)=u_0(x)$  converge locally uniformly to the solution to
\begin{equation}\label{jerrlimit}
\begin{cases} 
u_t    - \tr\left(I-\theta(|\nabla u|^2)\frac{\nabla u}{|\nabla u|}\otimes\frac{\nabla u}{|\nabla u|}\right)\nabla^2 u =0&  \text{in }  \R^n\times (0, +\infty)
\\
u(x,0)=u_0(x) & \text{in }  \R^n.
\end{cases}\end{equation}
In \cite{m}, the $1$-dimensional case has been considered, for $\alpha=1$. In particular it is proved that $\lim_{|p|\to +\infty} \theta(|p|)=l>0$ (see \cite[Thm 1.1]{m}). 
The description of the long time rescaling in  the case $\int_0^1 g(s)ds=0$ and $\alpha<1$ is completely open. 

\item Another interesting issue is the case in which  the forcing term $g$ depends on both 
variables $x$ and $u$. 
In particular, if we assume that $g:\R^{n+1} \to \R$ is  Lipschitz continuous,  $\Z^{n+1}$-periodic, and strictly positive, then
 the homogenization problem reads as follows 
 \begin{equation}\label{levelgen2}  
u^\eps_t    =  \eps^\alpha \Delta \ue+ g\left(\frac{x}{\eps},\frac{\ue}{\eps } \right)\quad \text{in } (0, +\infty)\times \R^n\end{equation}    with initial data 
$\ue(0,x)  = u_0(x)$,   
where $u_0$ satisfies \eqref{id}. 

When $\alpha=1$, we obtain the following cell problem for \eqref{levelgen2}: 

\noindent for every $p\neq 0$ there exists a unique $\bar c(p)$ such that there exists a solution to 
\begin{equation}\label{cellnew} 
\begin{cases}  
-\Delta_y \chi -|p|^2 \chi_{zz} -2 p\cdot \nabla_{y}\chi_z +\bar c(p)\chi_z(y,z) -g(y, \chi(y,z)) =0   
& (y,z)\in [0,1]^{n+1}
\\ 
\chi(y,z+1)=\chi(y,z)+1 & 
\\ 
\chi(\cdot, z) \text{ periodic,}  & \end{cases} \end{equation} 

\noindent for $p=0$ there exists a unique
$\bar c(0)$ such that there exists a solution to 
\[\begin{cases}  - \Delta_y \chi+\bar c(0)\chi_z(y,z) -g(y, \chi(y,z)) =0   & 
(y,z)\in [0,1]^{n+1}
\\ \chi(y,z+1)=\chi(y,z)+1 & 
\\ \chi(\cdot ,z) \text{ periodic.}  &\end{cases} \]

Existence of traveling wave solutions for such problem and a  homogenization result for plane-like initial data have been given in \cite{cdn}. 
\end{enumerate}

\end{document}